\newtheorem*{theorem_old}{Theorem}
\theoremstyle{definition}
\newtheorem{remark}{Remark}[section]
\DeclareMathOperator{\ord}{ord}
\DeclareMathOperator{\codim}{codim}
\DeclareMathOperator{\Ker}{Ker}
\DeclareMathOperator{\Imm}{Im}
\DeclareMathOperator{\dd}{d}
\DeclareMathOperator{\dt}{dt}
\DeclareMathOperator{\Spec}{Spec}
\DeclareMathOperator{\Hom}{Hom}
\begin{document}

\title{A differential analog of the Noether normalization lemma}

\author{Gleb Pogudin\affil{1}}
\abbrevauthor{G. Pogudin}
\headabbrevauthor{Pogudin, G.}

\address{\affilnum{1}Department of Mechanics and Mathematics, Moscow State University}
\correspdetails{pogudin.gleb@gmail.com}

\received{}
\communicated{}

\begin{abstract}
	In this paper, we prove the following differential analog of the Noether normalization lemma: for every $d$-dimensional differential algebraic variety over differentially closed field of zero characteristic there exists a surjective map onto the $d$-dimensional affine space.
    Equivalently, for every integral differential algebra $A$ over differential field of zero characteristic there exist differentially independent $b_1, \ldots, b_d$ such that $A$ is differentially algebraic over subalgebra $B$ differentially generated by $b_1, \ldots, b_d$, and whenever $\mathfrak{p} \subset B$ is a prime differential ideal, there exists a prime differential ideal $\mathfrak{q} \subset A$ such that $\mathfrak{p} = B \cap \mathfrak{q}$.
    
    We also prove the analogous theorem for differential algebraic varieties over the ring of formal power series over an algebraically closed differential field and present some applications to differential equations.
	\\
    \emph{MSC2010: 12H05.}
\end{abstract}


\maketitle

\section{Introduction}

	The Noether normalization lemma is an important result in commutative algebra and algebraic geometry.
    Stated geometrically, it says that for every affine algebraic variety of dimension~$d$ there exists a surjective map onto the $d$-dimensional affine space.
    One of possible (though not the strongest one) algebraic formulations is the following: for an integral algebra $A$ there exist algebraically independent $b_1, \ldots, b_d$ such that $A$ is algebraic over $B = k[b_1, \ldots, b_d]$, and whenever $\mathfrak{p} \subset B$ is a prime ideal, there exists a prime ideal $\mathfrak{q} \subset A$ such that $\mathfrak{p} = B \cap \mathfrak{q}$.
    More precisely, it turns out that the extension $B \subset A$ is integral.
    In this case due to the Cohen-Seidenberg theorem, the extension $A \subset B$ satisfies going-up and going-down properties for prime ideals.
    Besides its theoretical importance, the Noether normalization lemma is widely used in computational algebraic geometry (see \cite{mora}).

    As far as we know, there is no appropriate notion of integral extension for differential algebras.
    Going-up and going-down properties for quasi-prime differential ideals were investigated by Keigher in \cite{keigher}.
    Trushin extended the Cohen-Seidenberg theorem to the case of integral extensions (in the sense of commutative algebra) of differential algebras (\cite[th. 11-12]{trushin}).
    Geometrically, these extensions correspond to maps with finite fibers.
    However, in differential algebraic geometry zero-dimensional varieties are often infinite.
    In \cite[th. 1]{rosenfeld}, Rosenfeld showed that, for a differentially finitely generated extension $B \subset A$ of $k$-algebras without zero divisors, there exists an element $b \in B$ such that every prime ideal in a localization $B_b$ can be extended to a prime ideal in $A_b$ (see also paper by Kac \cite{kac}).
    
    We prove the following differential analogue of the Noether normalization lemma:
    \begin{itemize}
		\item For every $d$-dimensional differential algebraic variety over differentially closed field there exists a surjective map onto the $d$-dimensional affine space (Corollary~\ref{cor:alg_geom}).
        \item For every integral differential algebra $A$ there exist differentially independent $b_1, \ldots, b_d$ such that $A$ is differentially algebraic over subalgebra $B$ differentially generated by $b_1, \ldots, b_d$, and whenever $\mathfrak{p} \subset B$ is a prime differential ideal, there exists a prime differential ideal $\mathfrak{q} \subset A$ such that $\mathfrak{p} = B \cap \mathfrak{q}$ (Theorem~\ref{th:diff_alg}).
	\end{itemize}
    From the standpoint of differential equations Corollary~\ref{cor:alg_geom} means that for every system of differential equations in $n$ variables defining $d$-dimensional variety there exists an invertible change of variables such that for any tuple $(f_1, \ldots, f_d)$ of functions there exists a solution $(f_1, \ldots, f_n)$ extending this tuple.
    By function here we understand some element of the differentially closed field.
	However, we provide also an alternative version of Corollary~\ref{cor:alg_geom}, namely, Corollary~\ref{cor:infty}, which allows us to formulate similar result for solutions in a power series ring. See Section~\ref{sec:power_series} for details and Proposition~\ref{prop:complex_power_series} and Proposition~\ref{prop:complex_polynomial} for applications to algebraic differential equations over~$\mathbb{C}$ and over~$\mathbb{C}[t]$.
        
    There are two standard ways to prove the Noether normalization lemma: ''algebraic``, using some sufficiently general change of coordinates (see \cite[\S I.1]{mumford}), and  ''geometric``, using projections to hyperplanes of general position (\cite[\S II.8]{mumford}).
    Our proof is based upon the former.
    The main tool of the proof is a ``polynomial shift'' trick developed in the proof of \cite[th. 1]{me_primitive}.
    So, the proof is constructive in the sense that it gives an algorithm how to find a surjective map or elements $b_1, \ldots, b_d$.
    The second approach to algebraic Noether's normalization uses the completeness of projective varieties and the fact that the projective closure of an affine subvariety of the affine space does not contain all infinite points. 
    Interestingly, projective differential varieties are not complete (moreover, there are no complete differential algebraic varieties of positive dimension, see~\cite[Corollary 2.4]{pong}), and the projective closure of differential algebraic variety defined, for example, by $x^{\prime}y^{\prime} = 1$ contains the whole line at infinity. 
    
    The rest of the paper is organized as follows.
    In Section~\ref{sec:prelim} we introduce necessary notions from differential algebra and differential algebraic geometry.
    In Section~\ref{sec:nulstellensatz} we present specific differential $k$-algebras to be used in Theorem~\ref{th:alg_geom}.
    Section~\ref{sec:core_lemmas} contains several lemmas about differential polynomials, which are used further in the proof of Theorem~\ref{th:alg_geom}.
    Every lemma in Section~\ref{sec:core_lemmas} is supplied with references to the corresponding part of the proof of Theorem~\ref{th:alg_geom}.
    Section~\ref{sec:geometric} contains the proof of Theorem~\ref{th:alg_geom} (geometric version of the Noether normalization lemma) and its corollaries, namely, Corollary~\ref{cor:alg_geom} and Corollary~\ref{cor:infty}.
    In Section~\ref{th:diff_alg} we deduce Theorem~\ref{th:diff_alg} (algebraic version of the Noether normalization lemma) from Theorem~\ref{th:alg_geom}.
    In section~\ref{sec:power_series} we apply Corollary~\ref{cor:infty} to differential algebraic equations over~$\mathbb{C}$ and $\mathbb{C}[t]$  (Proposition~\ref{prop:complex_power_series} and Proposition~\ref{prop:complex_polynomial}).
    The last section contains some concluding remarks and acknowledgments.
	

\section{Preliminaries}\label{sec:prelim}
    
    \subsection{Differential algebra}
    
    Throughout the paper all fields are assumed to be of characteristic zero.
	Let $R$ be a ring.
	A map $D\colon R \to R$ satisfying $D(a + b) = D(a) + D(b)$ and $D(ab) = aD(b) + D(a)b$ for all $a, b \in R$ is called a \textit{derivation}.
    A \textit{differential ring} $R$ is a ring with a specified derivation. 
	In this case we will denote $D(x)$ by $x^{\prime}$ and $D^n(x)$ by $x^{(n)}$.
	A differential ring which is a field will be called a \textit{differential field}.
	Let $F \subset E$ be a differential field extension and $a \in E$.
	Let us denote by $F\langle a \rangle$ the differential subfield of $E$ generated by $F$ and $a$.
    If $A$ is a ring without zero divisors, for $s \in A$ by $A_s$ we denote the localization of $A$ with respect to $s$.
    Furthermore, if $A$ is a differential ring, the derivation on $A$ can be uniquely extended to $A_s$.
    
    A differential ring $A$ is said to be a \textit{differential $k$-algebra} over a differential field $k$ if $A$ is a $k$-algebra and the restriction of the derivation of $A$ on $k$ coincides with the derivation on $k$.
    Let $A \subset B$ be a differential $k$-algebra extension and $b \in B$.
    Let us denote by $A\{ b\}$ the differential subalgebra of $B$ generated by $b$ and $A$.
	An ideal $I$ of a differential ring $R$ is said to be a \textit{differential ideal}, if $a^{\prime} \in I$ for all $a \in I$.
	The differential ideal generated by $a_1, \ldots, a_n \in I$ will be denoted by $[a_1, \ldots, a_n]$.
    A differential ideal $I$ is \textit{radical} if, whenever $a^n \in I$ for some $n > 0$, $a \in I$.
    The minimal radical differential ideal containing $a_1, \ldots, a_n$ will be denoted by $\{ a_1, \ldots, a_n \}$.

	Let $A$ be a differential $k$-algebra.
    We consider a polynomial ring $A[x, x^{\prime}, \ldots, x^{(n)}, \ldots]$, where $x, x^{\prime}, x^{\prime\prime}, \ldots$ are algebraically independent variables.
    Extending the derivation from $A$ to $A[x, x^{\prime}, \ldots]$ by $D(x^{(n)}) = x^{(n + 1)}$ we obtain a differential algebra.
    This algebra is called the \textit{algebra of differential polynomials} in $x$ over $A$ and we denote it by $A\{ x\}$.
    Iterating this construction, we define the algebra of differential polynomials in variables $x_1, \ldots, x_n$ over $A$ and denote it by $A\{x_1, \ldots, x_n\}$.
    
    Let $P \in k\{x_1, \ldots, x_n\}$ be a differential polynomial.
    The \textit{order} of $P$ with respect to $x_i$ is the largest $n$ such that $x_i^{(n)}$ occurs in $P$, or $-\infty$ if $P$ does not depend on $x_i$.
    We denote it by $\ord_{x_i} P$.
    If $s = \ord_{x_i} P \geqslant 0$ then we can define the \textit{separant} of $P$ with respect to $x_i$ as the partial derivative $\frac{\partial P}{\partial x_i^{(s)}}$.
    Moreover, we can write $P$ as a univariate polynomial in $x_i^{(s)}$: 
    $$I_0 \left( x_i^{(s)} \right)^{D} + I_1 \left( x_i^{(s)} \right)^{D - 1} + \ldots + I_D$$
    Then, $I_0$ is said to be the \textit{initial} of $P$ with respect to $x_i$.
    The importance of the separant is explained by the following lemma.
    
    \begin{lemma}\label{lem:partial_reduction}
		Let $P \in k\{ x_1, \ldots, x_n\}$, and assume that $\ord_{x_1} P \geqslant 0$.
        By $S$ we denote the separant of $P$ with respect to $x_1$.
        Then, for every differential polynomial $Q \in k\{ x_1, \ldots, x_n\}$ there exists a differential polynomial $\widetilde{Q}$ such that $\ord_{x_1} \widetilde{Q} \leqslant \ord_{x_1} P$ and
        $$
        S^NQ - \widetilde{Q} \in [P] \mbox{ for some } N \in \mathbb{Z}_{\geqslant 0}.
        $$
	\end{lemma}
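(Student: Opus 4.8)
The plan is to argue by induction on $\ord_{x_1} Q$. If $\ord_{x_1} Q \leqslant \ord_{x_1} P =: s$, there is nothing to prove: take $\widetilde{Q} = Q$ and $N = 0$. So assume $t := \ord_{x_1} Q > s$ and that the statement is already known for every differential polynomial whose order in $x_1$ is smaller than $t$.

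The first step is to understand the iterated derivatives of $P$. By the Leibniz rule, $D(P) = S\, x_1^{(s+1)} + R_1$, where $S = \partial P / \partial x_1^{(s)}$ is the separant and $\ord_{x_1} R_1 \leqslant s$: indeed every other contribution to $D(P)$ comes either from differentiating coefficients of $P$, or from a term $\frac{\partial P}{\partial x_i^{(j)}} x_i^{(j+1)}$ with $i \neq 1$, or from $\frac{\partial P}{\partial x_1^{(j)}} x_1^{(j+1)}$ with $j < s$. Since differentiating $S\, x_1^{(m)}$ produces $S\, x_1^{(m+1)}$ plus terms of order at most $m$ in $x_1$, an easy induction on $t - s$ gives
$$
D^{t-s}(P) = S\, x_1^{(t)} + R, \qquad \ord_{x_1} R \leqslant t - 1 .
$$
This is the only place where the hypothesis $\ord_{x_1} P = s$ is genuinely used, and I expect it to be the one point requiring care (everything else is bookkeeping); note also $\ord_{x_1} S \leqslant s \leqslant t - 1$.

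The second step uses this to lower the order. Write $Q$ as a univariate polynomial in $x_1^{(t)}$, say $Q = \sum_{j=0}^{e} c_j \big(x_1^{(t)}\big)^{j}$ with $\ord_{x_1} c_j \leqslant t - 1$ for all $j$. Multiplying by $S^{e}$ and regrouping,
$$
S^{e} Q = \sum_{j=0}^{e} c_j\, S^{e - j} \big(S\, x_1^{(t)}\big)^{j} .
$$
Since $[P]$ is a differential ideal, $D^{t-s}(P) \in [P]$, hence $S\, x_1^{(t)} \equiv -R \pmod{[P]}$, and therefore
$$
S^{e} Q \equiv Q_1 := \sum_{j=0}^{e} c_j\, S^{e - j} (-R)^{j} \pmod{[P]} .
$$
Because $\ord_{x_1} c_j$, $\ord_{x_1} S$ and $\ord_{x_1} R$ are all at most $t - 1$, we get $\ord_{x_1} Q_1 \leqslant t - 1 < \ord_{x_1} Q$. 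Applying the induction hypothesis to $Q_1$ yields $\widetilde{Q}$ and $N_1$ with $\ord_{x_1}\widetilde{Q} \leqslant s$ and $S^{N_1} Q_1 - \widetilde{Q} \in [P]$. Then
$$
S^{e + N_1} Q - \widetilde{Q} = S^{N_1}\big(S^{e} Q - Q_1\big) + \big(S^{N_1} Q_1 - \widetilde{Q}\big) \in [P],
$$
so $N = e + N_1$ works, completing the induction. Since each step strictly decreases $\ord_{x_1}$ of the polynomial being reduced, the process terminates, which is why the final $N$ is a well-defined nonnegative integer.
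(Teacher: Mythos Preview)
Your proof is correct and follows essentially the same approach as the paper: both hinge on the identity $P^{(m)} = S\,x_1^{(s+m)} + (\text{lower-order terms})$ and use it to successively eliminate the highest derivative of $x_1$ from $Q$ modulo $[P]$. The only cosmetic difference is that the paper peels off one power of $x_1^{(H)}$ at a time (multiplying by a single $S$ per step), whereas you clear all powers of $x_1^{(t)}$ at once by multiplying by $S^{e}$ and then induct purely on $\ord_{x_1}$.
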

    \begin{proof}
    	Let $\ord_{x_1} P = h$.
		The key ingredient of the proof is the fact that $P^{(N)}$ can be rewritten as $S x_1^{(N + h)} + T$, where $\ord_{x_1} T < h + N$.
        Assume that the order of $Q$ with respect to $x_1$ is $H > h$.
        By $D$ denote $\deg_{x_1^{(H)}} Q$, so 
        $$Q = C_0 \left(x_1^{(H)} \right)^D + C_1 \mbox{, where }\deg_{x_1^{(H)}} C_1 < D\mbox{ and }\ord_{x_1} C_0 < H$$
        Let $Q_1 = SQ - C_0 \left( x_1^{(H)} \right)^{D - 1} P^{(H - h)}$.
        Then, $\deg_{x_1^{(H)}} Q_1 < D$.
        
        Repeating this procedure we obtain a sequence of differential polynomials $Q = Q_0, Q_1, \ldots, Q_M$ such that $SQ_i - Q_{i + 1} \in [P]$ for all $i$ and $\ord_{x_1} Q_M \leqslant h$.
        Then, $S^MQ - Q_M \in [P]$, so we can set $\widetilde{Q} = Q_M$.
	\end{proof}
    
    \begin{remark}
		The above lemma and Lemma~\ref{lem:hypersurface} are special cases of the theory of characteristic sets of differential ideals.
        We do not need this machinery in full generality, so we state and prove these cases only.
        For more profound account see \cite{sit_dart}.
	\end{remark}
    
	Let $A \subset B$ be an extension of differential $k$-algebras without zero divisors.
    Elements $b_1, \ldots, b_n \in B$ are said to be \textit{differentially independent} over $A$ if $P(b_1, \ldots, b_n) \neq 0$ for all nonzero $P \in A\{x_1, \ldots, x_n\}$.
	An element $b \in B$ is said to be \textit{differentially algebraic over} $A$ if there exists a nonzero differential polynomial $P \in A\{x\}$ such that $P(b) = 0$.
    If all elements of $B$ are differentially algebraic over $A$, the extension $A \subset B$ is said to be \textit{differential algebraic}.
    A set $b_1, \ldots, b_n$ of differentially independent over $A$ elements is said to be a \textit{differential transcendence basis} if every $b \in B$ is differentially algebraic over $A\{b_1, \ldots, b_n\}$.
    Then, the number $n$ does not depend on the choice of basis (see \cite[p. 44]{ritt}) and is said to be the \textit{differential transcendence degree} of $B$ over $A$.
    If $A$ is not specified explicitly, we take it to be $k$.
    
    \begin{remark}\label{rem:resultant}
		Let us remind an important property of the resultant of univariate polynomials which we will use several times throughout the paper (for details see \cite[\S 3.1]{uag}).
        Let $P(x), Q(x) \in A[x]$, where $A$ is a commutative UFD.
        Then, the resultant of $P(x)$ and $Q(x)$ is an element of $A$ satisfying following properties:
        \begin{itemize}
			\item There exist $S_P(x), S_Q(x) \in A[x]$ such that $R = S_P(x) P(x) + S_Q(x) Q(x)$.
            
            \item If $P(x)$ and $Q(x)$ have no common divisors of positive degree in $A[x]$, then $R \neq 0$.
		\end{itemize}
        Thus, if $R \neq 0$ and $f \colon A[x] \to B$ is a ring homomorphism such that $f(P(x)) = 0$ and $f(R) \neq 0$, then $f(Q(x)) \neq 0$.
	\end{remark}

	\subsection{Differential algebraic geometry}
    \label{subsec:diff_alg_geom}
    
    Let us fix a differential field $k$ and a differential $k$-algebra $R$ without zero divisors.
    By $\mathbb{A}^n(R)$ we denote the affine $n$-dimensional space over $R$.
    Let $\Sigma$ be a subset in $k\{ x_1, \ldots, x_n\}$.
    By $V_R(\Sigma) = \{ p\in \mathbb{A}^n(R) \mid \forall S \in \Sigma \quad S(p) = 0 \}$ we denote a set \textit{$R$-points of a differential algebraic $k$-variety}.
    Equivalently, $V_R(\Sigma)$ is said to be a differential algebraic $R$-variety defined over $k$.
    The Ritt-Raudenbush theorem (see \cite[Theorem 7]{raudenbush}) implies that every differential algebraic variety can be defined by a finite $\Sigma$.
    Conversely, for a differential algebraic $R$-variety $X \subset \mathbb{A}^n(R)$ defined over $k$ let $I_k(X) = \{ S \in k\{x_1, \ldots, x_n\} \mid \forall p \in X \quad S(p) = 0 \}$.
    Clearly, $I_k(X)$ is a radical differential ideal.
    We call a differential $k$-algebra $k\{ x_1, \ldots, x_n\} / I_k(X)$ the \textit{$k$-coordinate ring} of $X$. 
    As in the case of affine algebraic geometry, there is one-to-one correspondence between $R$-points of $X$ and differential homomorphisms from the $k$-coordinate ring of $X$ to $R$.
    We can topologize $\mathbb{A}^n(R)$ by taking differential algebraic varieties as basic closed sets.
    This topology is referred to as the Kolchin topology.
    
    A differential algebraic $R$-variety $X$ defined over $k$ is said to be \textit{reducible} over $k$ if $X = X_1 \cup X_2$ for some differential algebraic $R$-varieties $X_1, X_2$ defined over $k$ such that $X_1 \subsetneq X$ and $X_2 \subsetneq X$.
    Otherwise, $X$ is \textit{irreducible} over $k$.
    $X$ is irreducible over $k$ if and only if its $k$-coordinate ring contains no zero divisors or, equivalently, $I_k(X)$ is a prime differential ideal.
	Every differential algebraic variety can be expressed as a finite union of irreducible varieties (see \cite[p. 22]{ritt}).
    
    We say that $\varphi\colon \mathbb{A}^n(R) \to \mathbb{A}^m(R)$ is a $k$-map if there exist differential polynomials $P_1, \ldots, P_m \in k\{ x_1, \ldots, x_n\}$ such that 
    $$\varphi\left( (a_1, \ldots, a_n) \right) = \left( P_1(a_1, \ldots, a_n), \ldots, P_m(a_1, \ldots, a_n)\right) \mbox{ for all } (a_1, \ldots, a_n) \in \mathbb{A}^n (R)$$
	A $k$-map $\varphi\colon \mathbb{A}^n(R) \to \mathbb{A}^m(R)$ defines a homomorphism of differential algebras 
    $$\varphi^{*} \colon k\{ y_1, \ldots, y_m\} \to k\{ x_1, \ldots, x_n\} \mbox{ by } \varphi^{*}(y_i) = P_i(x_1, \ldots, x_n) \mbox{ for all }i,$$
    and vice versa a homomorphism of differential algebras $f\colon k\{ y_1, \ldots, y_m\} \to k\{ x_1, \ldots, x_n\}$ defines a $k$-map 
    $$f^{\#}\colon \mathbb{A}^n(R) \to \mathbb{A}^m(R) \mbox{ by } f^{\#}(p) = (f(y_1)(p), \ldots, f(y_m)(p))$$
    
    For an irreducible over $k$ differential algebraic $R$-variety $X \subset \mathbb{A}^n(R)$ we define the dimension $\dim X$ as the differential transcendence degree of its $k$-coordinate ring $k\{x_1, \ldots, x_n\} / I_k(X)$.
    We set dimension of an arbitrary variety to be the maximum of the dimensions of its irreducible components.
    Let $\codim X = n - \dim X$.
    Irreducible varieties of codimension one admit the following characterization in terms of the corresponding ideal.
    \begin{lemma}\label{lem:hypersurface}
    	Let $I \subset k\{ x_1, \ldots, x_n \}$ be a prime differential ideal such that the differential transcendence degree of $k\{ x_1, \ldots, x_n \} / I$ is $n - 1$. 
		Then, there exists $i$ ($1 \leqslant i \leqslant n$) and irreducible differential polynomial $P \in k\{x_1, \ldots, x_n\}$ such that
        $$
        I = [P] : S^{\infty} = \{ Q \in K\{ x_1, \ldots, x_n\} \mid \exists n\colon S^nQ \in [P] \},
        $$
        where $S$ is the separant of $P$ with respect to $x_i$.
	\end{lemma}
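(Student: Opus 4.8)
\emph{Proof proposal.} The plan is to adapt the classical argument showing that a height-one prime of a polynomial ring over a field is principal, with the role of polynomial division played by the partial reduction of Lemma~\ref{lem:partial_reduction} and the role of the greatest common divisor played by the resultant of Remark~\ref{rem:resultant}. Write $A = k\{x_1,\ldots,x_n\}/I$ and let $\bar x_i$ be the image of $x_i$. Since $A$ is differentially generated over $k$ by $\bar x_1,\ldots,\bar x_n$ and has differential transcendence degree $n-1$, a standard exchange argument lets us extract a differential transcendence basis among the $\bar x_i$; after renumbering we may assume $\bar x_1,\ldots,\bar x_{n-1}$ are differentially independent over $k$, equivalently $I\cap k\{x_1,\ldots,x_{n-1}\}=(0)$. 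Note $I\neq(0)$ (else the degree would be $n$), so every nonzero element of $I$ genuinely involves $x_n$ and in particular has nonnegative order in $x_n$. It is this index $n$ that will play the role of $i$ in the statement.

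Next I would choose $P$. Among all nonzero elements of $I$ pick one with $h:=\ord_{x_n}P$ minimal, and then with $d:=\deg_{x_n^{(h)}}P$ minimal. Since $k\{x_1,\ldots,x_n\}$ is a unique factorisation domain and $I$ is prime, some irreducible factor of $P$ lies in $I$; by minimality this factor again has order $h$ and degree $d$ in $x_n^{(h)}$ (order and leading-variable degree being, respectively, the maximum and the sum over the factors), so we may assume $P$ is irreducible. Let $S=\partial P/\partial x_n^{(h)}$ be its separant. I would then check $S\notin I$: $S$ is nonzero with $\deg_{x_n^{(h)}}S=d-1$, so either it still involves $x_n^{(h)}$, in which case it is a nonzero element of $I$ of order $h$ and smaller leading-variable degree, or it does not, in which case it has order $<h$ in $x_n$ (and if it involves no $x_n$ at all it lies in $I\cap k\{x_1,\ldots,x_{n-1}\}=(0)$); each possibility contradicts the minimality of $P$ or the fact that $I\neq(0)$.

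Finally I would prove the two inclusions. Since $P\in I$ and $I$ is a differential ideal, $[P]\subseteq I$; and if $S^mQ\in[P]\subseteq I$ with $I$ prime and $S\notin I$, then $Q\in I$, giving $[P]:S^\infty\subseteq I$. For the reverse inclusion, take $Q\in I$. By Lemma~\ref{lem:partial_reduction} (applied with $x_n$ in place of $x_1$) there is $\widetilde Q$ with $\ord_{x_n}\widetilde Q\leqslant h$ and $S^NQ-\widetilde Q\in[P]\subseteq I$, hence $\widetilde Q\in I$. View $P$ and $\widetilde Q$ as polynomials in $x_n^{(h)}$ over the UFD $B:=k\{x_1,\ldots,x_{n-1}\}[x_n,x_n^{\prime},\ldots,x_n^{(h-1)}]$. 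If $P\nmid\widetilde Q$ in $B[x_n^{(h)}]$, then, $P$ being irreducible of positive degree in $x_n^{(h)}$, $P$ and $\widetilde Q$ have no common factor of positive degree, so by Remark~\ref{rem:resultant} their resultant with respect to $x_n^{(h)}$ is a \emph{nonzero} element of $B$ lying in $(P)+(\widetilde Q)\subseteq I$; but an element of $B$ has order $\leqslant h-1$ in $x_n$ (or involves no $x_n$), contradicting the minimality of $h$ (or $I\cap k\{x_1,\ldots,x_{n-1}\}=(0)$). Hence $P\mid\widetilde Q$, so $\widetilde Q\in[P]$ and $S^NQ\in[P]$, i.e. $Q\in[P]:S^\infty$. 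Combining the inclusions yields $I=[P]:S^\infty$.

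The one step that needs care is the last one: ensuring the resultant is formed with respect to the top derivative $x_n^{(h)}$, that it genuinely lowers the order in $x_n$, and that it is expressed as a combination of $P$ and $\widetilde Q$ with \emph{polynomial} (not merely rational) coefficients, so that it lands back inside $I$ and can be played against the minimality of $P$. The remaining ingredients — extracting a coordinate transcendence basis, the partial reduction, and $S\notin I$ — are routine once the minimal $P$ is fixed.
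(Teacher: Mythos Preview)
Your proof is correct and follows essentially the same approach as the paper's: pick $P\in I$ of minimal order in $x_n$ (with a secondary degree minimality), then use partial reduction (Lemma~\ref{lem:partial_reduction}) followed by the resultant in $x_n^{(h)}$ (Remark~\ref{rem:resultant}) to force $\widetilde Q$ to be divisible by $P$, while $S\notin I$ gives the reverse inclusion. The only cosmetic difference is that the paper takes minimal \emph{total degree} as the tiebreaker (which makes ``$I$ prime $\Rightarrow$ $P$ irreducible'' and ``$S\notin I$'' one-liners), whereas you take minimal $\deg_{x_n^{(h)}}$ and spell these out explicitly; both choices work.
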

    
    \begin{proof}
		Without loss of generality we may assume that $x_1, \ldots, x_{n - 1}$ are differentially independent modulo $I$.
		By $h$ denote the minimal possible order with respect to $x_n$ among all non-zero polynomials in $I$.
        Clearly, $h \geqslant 0$.
        Let $P \in I$ be a nonzero differential polynomial such that $\ord_{x_n} P = h$ and the total degree of $P$ is minimal possible.
        Since $I$ is prime, $P$ is irreducible.
        By $S$ denote the separant of $P$ with respect to $x_n$.
        
        Consider an arbitrary differential polynomial $Q \in I$.
        Lemma~\ref{lem:partial_reduction} implies that there exists $\widetilde{Q}$ such that $\ord_{x_n} \widetilde{Q} \leqslant h$ and $S^NQ - \widetilde{Q} \in [P] \subset I$.
        Then, $\widetilde{Q} \in I$.
        By $R$ denote the resultant of $\widetilde{Q}$ and $P$ with respect to $x_n^{(h)}$.
        Due to the properties of the resultant (see Remark~\ref{rem:resultant}), there exist $A, B \in k\{x_1, \ldots, x_{n - 1}\}[x_n, \ldots, x_n^{(h - 1)}]$ such that $R = AP + B\widetilde{Q}$.
        Hence, $R \in I \cap k\{x_1, \ldots, x_{n - 1}\}[x_n, \ldots, x_n^{(h - 1)}]$, so $R = 0$.
        Since $P$ is irreducible, $\widetilde{Q}$ is divisible by $P$, so $S^NQ \in [P]$ and $I \subset [P] : S^{\infty}$.
        
        On the other hand, consider $Q \in [P] : S^{\infty}$.
        There exists $N$ such that $S^NQ \in [P] \subset I$.
        Since $I$ is prime and $S \notin I$, $Q \in I$.
        So, $I \supset [P] : S^{\infty}$.
	\end{proof}
    
    
    \section{Differential $k$-algebras with sufficiently many points}\label{sec:nulstellensatz}
    
    In Section~\ref{subsec:diff_alg_geom} we introduced basic notions of differential algebraic geometry in the case of points in an arbitrary differential $k$-domain.
    However, in what follows we are interested in differential domains such that there are ``sufficiently many points'' over them.
    
    \begin{definition}\label{def:many_points}
    	Let $R$ be a differential $k$-algebra over a differential field $k$.
        Assume that $R$ is a local ring (in a sense of commutative algebra) with algebraically closed residue field $k_0$.
        We will say that there are \emph{sufficiently many points} over $R$, if for every differentially finitely generated $k$-algebra $A$ the existence of a homomorphism $A \to k_0$ implies that there exists a differential homomorphism $A \to R$.
        We also note that there is an embedding of fields $k \to k_0$.
    \end{definition}
    
    \begin{proposition}\label{prop:many_nullstellensatz}
		Let $A$ be a differentially finitely generated $k$-algebra.
        Assume that there are sufficiently many points over $R$.
        Then, there exists a differential $k$-homomorphism $A \to R$.
	\end{proposition}
    
    \begin{proof}    
        Let $\mathfrak{m} \subset A$ be any maximal differential ideal.
        Since every maximal differential ideal over a field of zero characteristic is prime (see \cite[Proposition 1.19]{magid}), $A / \mathfrak{m}$ contains no zero divisors.
        Replacing $A$ with $A /\mathfrak{m}$ if necessary, we may assume that $A$ is integral.
        Due to Statement~5 from \cite{dima_inheritance} there exists an element $s \in A$ such that the localization $A_s$ can be written as $B[y_{\alpha}]$, where $B$ is a finitely generated $k$-algebra (not necessarily differential) and $\{ y_{\alpha}\}$ is an at most countable set of variables algebraically independent over $B$.
        Hilbert's Nullstellensatz implies that there exists a homomorphism $B \to k_0$.
        Moreover, it can be extended to a homomorphism $A_s = B[y_{\alpha}] \to k_0$ by sending elements of $\{ y_{\alpha} \}$ to arbitrary elements of $k_0$.
        Thus, we constructed a homomorphism $A \to k_0$.
        Then, a desired differential homomorphism exists due to Definition~\ref{def:many_points}.
    \end{proof}
    
    During the rest of the section we consider two types of such $k$-algebras, namely, differentially closed extensions and ring of power series (see Corollary~\ref{cor:nulstellensatz_diff_closed} and Corollary~\ref{cor:infty_many_points}).
    
    \subsection{Differentially closed fields}
    \label{subsec:diff_closed}

	A differential field $K$ is \textit{differentially closed} if, whenever $f, g \in K\{ x\}$, $g$ is nonzero and the order of $f$ is greater then the order of $g$, there is $a \in K$ such that $f(a) = 0$ and $g(a) \neq 0$.
	A differential field $K \supset k$ is a \textit{differential closure} of a differential field $k$ if $K$ is differentially closed field and the extension $k \subset K$ is differentially algebraic.
	Every differential field $k$ has a differential closure and any two differential closures of $k$ are isomorphic over $k$ (see \cite[Corollary 4.16]{scanlon_dart}).
	The importance of the differential closure can be illustrated by the following form of the differential Nullstellensatz (see \cite[Corollary 2.6]{marker}):
	\begin{theorem_old}
		If $k$ is a differential field and $\Sigma$ is a finite system of differential equations and inequations over $k$ such that $\Sigma$ has a solution in some $l \supset k$, then $\Sigma$ has a solution in any differentially closed $K \supset k$.
 	\end{theorem_old}
    
    \begin{corollary}\label{cor:nulstellensatz_diff_closed}
		Let $A$ be a finitely generated differential $k$-algebra, and $K \supset k$ is a differentially closed extension.
        Then, there exists a nonzero differential homomorphism $A \to K$.
	\end{corollary}
    
    Corollary~\ref{cor:nulstellensatz_diff_closed} implies that there are sufficiently many points over $K$.
    The following lemma shows that $k$-algebras with sufficiently many points behave similar to differential closures.

	\begin{lemma}\label{lem:nullstel_implies_diffclosed}
		Consider $f, g \in k\{ x \}$, where $g$ is nonzero, and the order of $f$ is greater then the order of $g$.
        Assume that there are sufficiently many points over $R$.
        Then, there exists $a \in R$ such that $f(a) = 0$ and $g(a)$ is invertible.
	\end{lemma}
    
    \begin{proof}
		Let $K$ be the differential closure of $k$.
        There exists $b \in K$ such that $f(b) = 0$ and $g(b) \neq 0$.
        Consider a homomorphism $\varphi\colon k\{x\} \to K$ defined by $\varphi(h) = h(b)$ for all $h(x) \in k\{ x\}$.
        By $A$ we denote a localization $\left( \Imm \varphi\right)_{\varphi(g)}$.
        Since $A$ is a differentially finitely generated $k$-algebra, there exists a nonzero $\psi\colon A \to R$.
        Let $a = \psi\left( \varphi(x) \right)$.
        Then, $f(a) = 0$ and $g(a)$ is invertible.
	\end{proof}

    
    \subsection{Power series}
	\label{subsec:power_series}

    Let $k$ be an algebraically closed differential field.
	By $k_0$ we denote the underlying pure field of $k$.
    An algebra of formal power series $k_{\infty} = k_0[[t]]$ is a differential ring with respect to derivation $\frac{\dd}{\dt}$.
    Moreover, a differential homomorphism $i \colon k \to k_{\infty}$ defined by $i(a) = \sum\limits_{j = 0}\frac{a^{(j)}}{j!} t^j$ turns $k_{\infty}$ into a differential $k$-algebra.
    Let us emphasize that the product of $a \in k$ and $\sum\limits_{j = 0}^{\infty} b_j t^j$ is not equal to $\sum\limits_{j = 0}^{\infty} ab_j t^j$, but to $i(a) \cdot \sum\limits_{j = 0}^{\infty} b_j t^j$.
    However, in the cases when $k$ consists solely of constants the multiplication of an element of $k_{\infty}$ by an element of $k$ is component-wise.
    
    Let us define also a homomorphism $\varepsilon\colon k_{\infty} \to k_0$ by formula $\varepsilon\left( \sum\limits_{j = 0}^{\infty} a_j t^j \right) = a_0$.
    Every differential $k$-homomorphism $\tilde{f} \colon A \to k_{\infty}$ defines a homomorphism $\varepsilon \circ \tilde{f} \colon A \to k_0$.
    The following lemma shows that the converse also holds.

	\begin{lemma}\label{lem:taylor}
    	Let $A$ be a differential $k$-algebra.
        Let $f \colon A \to k_0$ be a homomorphism.
        Then, there exists a differential $k$-homomorphism $\tilde{f} \colon A \to k_\infty$ such that $\varepsilon \circ \tilde{f} = f$.
	\end{lemma}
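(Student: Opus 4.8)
The plan is to recover $\tilde f$ from $f$ by a formal Taylor series: for $a \in A$ set
$$\tilde f(a) := \sum_{j=0}^{\infty} \frac{f(a^{(j)})}{j!}\, t^j,$$
which is a well-defined element of $k_\infty = k_0[[t]]$ because $k$ has characteristic zero, so every $j!$ is invertible, and each $f(a^{(j)})$ lies in $k_0$. With this definition the normalization condition $\varepsilon \circ \tilde f = f$ is automatic, since $\varepsilon(\tilde f(a))$ is by definition the coefficient of $t^0$, namely $f(a^{(0)}) = f(a)$. The remaining work is to verify that $\tilde f$ is a homomorphism of differential $k$-algebras.

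Additivity of $\tilde f$ is immediate from the additivity of $f$ and of the derivation on $A$. Multiplicativity is the only step involving a computation, and I regard it as the heart of the lemma: writing $(ab)^{(n)} = \sum_{j=0}^{n}\binom{n}{j} a^{(j)} b^{(n-j)}$ by the Leibniz rule and applying the ring homomorphism $f$, one obtains
$$\frac{f\big((ab)^{(n)}\big)}{n!} = \sum_{j=0}^{n} \frac{1}{n!}\binom{n}{j}\, f(a^{(j)})\, f(b^{(n-j)}) = \sum_{j=0}^{n} \frac{f(a^{(j)})}{j!}\cdot\frac{f(b^{(n-j)})}{(n-j)!},$$
using $\frac{1}{n!}\binom{n}{j} = \frac{1}{j!(n-j)!}$; the right-hand side is exactly the coefficient of $t^n$ in the Cauchy product $\tilde f(a)\,\tilde f(b)$, whence $\tilde f(ab) = \tilde f(a)\tilde f(b)$.

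That $\tilde f$ commutes with the derivations is seen by differentiating the series term by term:
$$\frac{\dd}{\dt}\,\tilde f(a) = \sum_{j \geq 1} \frac{f(a^{(j)})}{(j-1)!}\, t^{j-1} = \sum_{m \geq 0} \frac{f\big((a')^{(m)}\big)}{m!}\, t^m = \tilde f(a').$$
Finally, $\tilde f$ is a homomorphism of $k$-algebras, i.e.\ $\tilde f \circ i = i$ on $k$: for $a \in k \subset A$ we have $f(a^{(j)}) = a^{(j)}$, since $f$ restricts to the identity on $k$, and hence $\tilde f(a) = \sum_j \frac{a^{(j)}}{j!}\, t^j = i(a)$. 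I do not anticipate any genuine obstacle; the only point that demands care is to remember that $k_\infty$ carries its $k$-algebra structure through $i$ rather than coefficientwise, but this plays no role in the homomorphism property and enters only in the last identification $\tilde f|_k = i$.
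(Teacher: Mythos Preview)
Your proof is correct and follows exactly the approach of the paper, which defines $\tilde f$ by the same Taylor formula $\tilde f(a) = \sum_{j\geqslant 0} \frac{f(a^{(j)})}{j!}\,t^j$ and simply asserts that it is easy to verify this works. You have merely spelled out the routine verifications (multiplicativity via Leibniz, compatibility with the derivation, and $\tilde f|_k = i$) that the paper omits.
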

        
    \begin{proof}
		It is easy to verify that the formula $\tilde{f}(x) = \sum\limits_{j = 0} \frac{f\left( x^{(j)}\right)}{j!} t^j$ defines a desired homomorphism.
	\end{proof}

	\begin{corollary}\label{cor:infty_many_points}
		There are sufficiently many points over $k_{\infty}$.
	\end{corollary}

	\begin{remark}
		The above proof uses so-called Taylor homomorphism. For details, see \cite[\S 44.3]{YuPmonogr}.
	\end{remark}

    \begin{remark}
		Lemma~\ref{lem:taylor} can be interpreted in the context of category theory.
        Let us define a functor $F$ from the category $k_0\text{-}\mathbf{Alg}$ of $k_0$-algebras to the category $k\text{-}\mathbf{DiffAlg}$ of differential $k$-algebras sending an algebra $A$ to a differential algebra $A_{\infty} = A[[t]]$.
        For $\varphi \in \Hom_{k_0\textit{-}\mathbf{Alg}} (A, B)$ we define $F(\varphi)$ by $$F(\varphi) \left( \sum\limits_{j = 0}^{\infty} a_j t^j \right) = \sum\limits_{j = 0}^{\infty} \varphi(a_j) t^j$$
        
        Then, Lemma~\ref{lem:taylor} follows from the fact that $F$ is a left adjoint to the forgetful functor from $k\text{-}\mathbf{DiffAlg}$ to $k_0\text{-}\mathbf{Alg}$.
        Indeed, a homomorphism $A \to k_0$ yields a differential homomorphism $A_{\infty} \to k_{\infty}$, and $A$ can be embedded to $A_{\infty}$ as a differential $k$-algebra analogously to the embedding $i\colon k \to k_{\infty}$.
        Thus, we have a composition $A \to A_\infty \to k_\infty$ which gives us a desired differential homomorphism.
	\end{remark}

	\begin{remark}
		Unlike the case of differentially closed extension, Lemma~\ref{lem:nullstel_implies_diffclosed} is not longer true if $f \in k_{\infty} \{ x\}$.
        For example, polynomial $x^2 - t$ has no zeros in $\mathbb{C}[[t]]$.
	\end{remark}


	\section{Core lemmas}\label{sec:core_lemmas}

	The following lemma is a slight modification of a well-known lemma (see, \cite[p.35]{ritt}).

	\begin{lemma}\label{lem:substitution}
      	Let $P(x) \in k\{ x\}$ be a nonzero differential polynomial and $\ord P \leqslant h$.
        Assume that there exists $t \in k$ such that $t^{\prime} = 1$.
        Then, there exists a polynomial $s(t) \in \mathbb{Q}[t]$ such that $\deg_t s \leqslant h$ and $P(s(t)) \neq 0$.
	\end{lemma}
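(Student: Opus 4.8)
The plan is to reduce the claim to a statement about ordinary polynomials in $t$ by expanding $P(s(t))$ in terms of the coefficients of $s$, and then to choose those coefficients generically. Concretely, I would write $s(t) = c_0 + c_1 t + \ldots + c_h t^h$ with $c_0, \ldots, c_h$ to be determined in $\mathbb{Q}$, and regard $P(s(t))$ as an element of $k[c_0, \ldots, c_h][t]$. The key computation is the derivative: since $t' = 1$, we have $s(t)' = c_1 + 2c_2 t + \ldots + h c_h t^{h-1}$, and more generally $s(t)^{(j)}$ is again a polynomial in $t$ with coefficients that are $\mathbb{Q}$-linear combinations of $c_j, \ldots, c_h$. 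In particular, substituting $x^{(j)} \mapsto s(t)^{(j)}$ into $P$ yields a polynomial in $t$ whose coefficients lie in $k[c_0, \ldots, c_h]$.

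The main step is to show that this polynomial in $t$ is not identically zero as an element of $k[c_0, \ldots, c_h][t]$, i.e.\ that $P(s(t))$ does not vanish for the "generic" choice of coefficients. For this I would look at the highest-order variable occurring in $P$. Suppose $\ord P = r \leqslant h$ and write $P$ as a (nonzero) polynomial in $x^{(r)}$ of degree $D \geqslant 1$ with leading coefficient a nonzero differential polynomial in $x, x', \ldots, x^{(r-1)}$. The highest power of $t$ that $x^{(r)} = s(t)^{(r)}$ contributes is $t^{h-r}$, with coefficient $\frac{h!}{(h-r)!} c_h$ (up to the obvious factor), which is a nonzero rational multiple of $c_h$. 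Tracking the top-degree-in-$t$ term through the substitution, I would argue that the coefficient of the highest power of $t$ in $P(s(t))$ is a nonzero polynomial in $c_h$ (of degree at least $D$ in $c_h$), hence $P(s(t)) \not\equiv 0$ in $k[c_0,\ldots,c_h][t]$.

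Once we know $P(s(t))$ is a nonzero element of $k[c_0,\ldots,c_h][t]$, equivalently a nonzero polynomial in $c_0,\ldots,c_h$ over $k[t]$ (or just: the finitely many coefficients-in-$t$ are not all the zero polynomial in the $c_i$), the conclusion follows from the fact that $\mathbb{Q}$ is infinite: a nonzero polynomial over a field (or domain) of characteristic zero has a non-root in $\mathbb{Q}^{h+1}$. Choosing such a tuple $(c_0,\ldots,c_h) \in \mathbb{Q}^{h+1}$ gives $s(t) \in \mathbb{Q}[t]$ with $\deg_t s \leqslant h$ and $P(s(t)) \neq 0$, as required. (Here one uses that $k$ has characteristic zero so that the nonzero rational coefficients arising from differentiation do not vanish.)

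The step I expect to be the main obstacle is the bookkeeping in the second paragraph: rigorously identifying which monomial in $t$ survives as the top-degree term and confirming its coefficient is a nonzero polynomial in the $c_i$. The subtlety is that lower-order derivatives $s(t)^{(j)}$ for $j < r$ also contribute powers of $t$ up to $t^{h-j} > t^{h-r}$, so a naive "leading term" argument on $x^{(r)}$ alone is not enough — one must check that the genuinely highest power of $t$ in the whole expression $P(s(t))$ comes from the monomial of $P$ of maximal "weight" (weighting $x^{(j)}$ by $h-j$... or rather, one should instead isolate the contribution of $c_h$ by a degree argument in $c_h$, noting $x^{(j)}$ depends on $c_h$ through the term $\frac{h!}{(h-j)!}c_h t^{h-j}$). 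Setting all $c_i = 0$ for $i < h$ and $c_h = c$ reduces $s(t)$ to $c t^h$, $s(t)^{(j)} = c\,\frac{h!}{(h-j)!} t^{h-j}$, and then $P(c t^h, c\,h\,t^{h-1}, \ldots)$ is, after collecting, a nonzero Laurent-free polynomial in $c$ and $t$ precisely because substituting generic $t$ into the original $P$ with the scaling $x^{(j)} \mapsto \lambda_j c\, t^{h-j}$ (with $\lambda_j \neq 0$ fixed rationals) cannot kill a nonzero $P$ — this last point is exactly the classical lemma from \cite[p.35]{ritt} that the present statement modifies, and invoking it cleanly is the crux.
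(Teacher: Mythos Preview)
Your overall setup matches the paper's exactly: write $s(t)=\sum_{i=0}^h c_i t^i$ with indeterminate coefficients, observe that $P(s(t))\in k[c_0,\ldots,c_h]$ (recall $t\in k$), argue it is nonzero there, and then specialize the $c_i$ to rationals. The gap is the non-vanishing step, and both of your proposed arguments fail on the same example. Take $h=2$ and $P = 2xx'' - (x')^2$. With $c_0=c_1=0$ and $s(t)=ct^2$ one gets $s'=2ct$, $s''=2c$, so $P(s)=4c^2t^2-4c^2t^2=0$; thus the ``specialize to $ct^h$'' reduction really does kill a nonzero $P$, contrary to what you assert in the last paragraph. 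Keeping all $c_i$, one finds $P(s)=4c_0c_2-c_1^2$, which is indeed nonzero --- but has degree~$0$ in $t$ and degree~$1$ in $c_2$, so the top-degree-in-$t$ and top-degree-in-$c_h$ heuristics both collapse under cancellation between terms of equal weight.

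The paper sidesteps all this bookkeeping with one line. For $m=0,\ldots,h$ the element $p_m:=s^{(m)}(t)=\sum_{i=m}^h \tfrac{i!}{(i-m)!}\,c_i\,t^{i-m}$ is a $k$-linear combination of $c_m,\ldots,c_h$ (since $t\in k$), and the change of variables $(c_0,\ldots,c_h)\mapsto(p_0,\ldots,p_h)$ is upper-triangular with diagonal entries $0!,1!,\ldots,h!$, hence invertible over $k$. Therefore $p_0,\ldots,p_h$ are themselves algebraically independent over $k$, and $P(p_0,\ldots,p_h)\neq 0$ is immediate. That observation replaces your second and fourth paragraphs entirely.
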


    \begin{proof}
        Let us introduce algebraically independent variables $a_0, \ldots, a_h$.
        Consider an expression 
        $$p_m = a_h h^{\underline{m}} t^{h - m} + \ldots + a_m m! \mbox{ } (m = 0, \ldots, n) \mbox{, where } h^{\underline{m}} = h(h - 1)\ldots (h - m + 1)$$
        Since $k(a_0, \ldots, a_h) = k(p_0, \ldots, p_h)$, $p_0$, $\ldots$, $p_h$ are algebraically independent over $k$.
        Thus, substituting $x^{(m)} = p_m$ (for $m = 1, \ldots, h$) in $P(x)$, we obtain a nonzero polynomial from $k[a_0, \ldots, a_h]$.
        Let $(b_0, \ldots, b_h) \in \mathbb{Q}^h$ be a non-vanishing point of this polynomial.
    	Hence, if $s(t) = b_h t^h + \ldots + b_0$, then $P(s(t)) \neq 0$.
	\end{proof}
        
    Iterating this lemma, we obtain the following corollary.

	\begin{corollary}\label{cor:substitution}
		Let $P(x_1, \ldots, x_l) \in k\{ x_1, \ldots, x_l \}$ be a nonzero differential polynomial such that $\ord_{x_i} P \leqslant h$ for each $1 \leqslant i \leqslant l$.
        Assume that there exists $t \in k$ such that $t^{\prime} = 1$.
        Then, there exist polynomials $s_1(t), \ldots, s_l(t) \in \mathbb{Q}[t]$ such that $\deg s_i \leqslant h$ for each $1 \leqslant i \leqslant l$ and $P(s_1(t), \ldots, s_l(t)) \neq 0$.
	\end{corollary}
    
    The following lemma serves for mapping $d$-dimensional variety emmbedded into the $n$-dimensional affine space to a subset of the $d + 1$-dimensional affine space of certain form. For application, see Lemma~\ref{lem:project_to_d1_geom}.
    
    \begin{lemma}\label{lem:project_to_d1}
		Let $A$ be a differential $k$-algebra differentially generated by $a_1, \ldots, a_n \in A$.
        Assume that $A$ contains no zero divisors and the differential transcendence degree of $A$ is $d > 0$.
        By $p \colon k\{ x_1, \ldots, x_n\} \to A$ we denote the canonical projection defined by $p(x_i) = a_i$.
        
        Then, there exists an injective differential homomorphism $f\colon k\{ y_1, \ldots, y_{d + 1} \} \to k\{x_1, \ldots, x_n\}$ such that $f(y_i) \in \mathbb{Q}[x_1, \ldots, x_n]$ for all $i$ and $A_{p(f(Q))} = \left(\Imm (p\circ f) \right)_{p(f(Q))}$ for some $Q \in k\{ y_1, \ldots, y_{d + 1}\}$.  
	\end{lemma}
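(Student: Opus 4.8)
\emph{Proof strategy.} The plan is to carry out a differential Noether normalization step: keep $d$ differentially independent generators and compress the remaining ones into a single differential primitive element produced by a polynomial shift. Since $A = k\{a_1,\ldots,a_n\}$ has differential transcendence degree $d$, we have $n \geqslant d$, and since an injective $f$ can exist only when $n \geqslant d+1$, I assume $n \geqslant d+1$ (this is the case occurring in the applications). After relabelling the generators, $a_1,\ldots,a_d$ may be taken differentially independent over $k$, so $a_{d+1},\ldots,a_n$ are differentially algebraic over $F := k\langle a_1,\ldots,a_d\rangle$ and $\Frac(A) = F\langle a_{d+1},\ldots,a_n\rangle$. (If $n = d+1$ the construction below is vacuous: $c = a_{d+1}$, $\hat c = x_{d+1}$, $s = 1$, and $\Imm(p\circ f) = A$ already.)

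The heart of the argument is the compression step, and this is where the hypothesis $d > 0$ enters. The element $a_1 \in F$ is differentially transcendental over $k$, so $a_1, a_1', a_1'', \ldots$ are algebraically independent; hence $a_1$ can serve as the shift parameter, playing the role that an element with derivative $1$ plays in Lemma~\ref{lem:substitution}. Feeding the minimal differential polynomials of $a_{d+1},\ldots,a_n$ over $F$ into the polynomial shift of \cite[proof of Th.~1]{me_primitive}, applied iteratively to combine the generators two at a time, produces degree bounds such that, for one-variable polynomials $g_{d+2},\ldots,g_n$ of those degrees whose coefficient vector avoids a proper Zariski closed subset $Z$ of an affine space over $\Frac(A)$, the element
\[
c := a_{d+1} + \sum_{j=d+2}^{n} g_j(a_1)\,a_j
\]
is a differential primitive element, that is, $\Frac(A) = F\langle c\rangle$. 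Since $\mathbb{Q} \subseteq \Frac(A)$ is infinite, $Z$ misses a rational point (as in the proof of Lemma~\ref{lem:substitution}), so the $g_j$ can be taken in $\mathbb{Q}[x_1]$. Putting
\[
\hat c := x_{d+1} + \sum_{j=d+2}^{n} g_j(x_1)\,x_j \in \mathbb{Q}[x_1,\ldots,x_n],
\]
I define $f$ by $f(y_i) = x_i$ for $i \leqslant d$ and $f(y_{d+1}) = \hat c$; then $c = p(\hat c)$ and $f(y_i) \in \mathbb{Q}[x_1,\ldots,x_n]$ for all $i$.

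Two verifications remain. For injectivity of $f$: the differential $k$-algebra endomorphism of $k\{x_1,\ldots,x_n\}$ fixing each $x_i$ with $i \neq d+1$ and sending $x_{d+1} \mapsto \hat c$ is an automorphism, its inverse being determined by $x_{d+1} \mapsto x_{d+1} - \sum_{j>d+1} g_j(x_1)\,x_j$; since this automorphism fixes $x_1,\ldots,x_d$ and carries $x_{d+1}$ to $\hat c$, the elements $x_1,\ldots,x_d,\hat c$ are differentially independent over $k$, so $f$ is injective. For the localization equality: from $\Frac(A) = F\langle c\rangle = \Frac(k\{a_1,\ldots,a_d,c\})$ one writes each $a_j$ with $d < j \leqslant n$ as $u_j/v_j$ with $u_j, v_j \in k\{a_1,\ldots,a_d,c\}$ and $v_j \neq 0$; set $s := \prod_{j>d} v_j$, which is nonzero because $A$ is a domain. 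Then $a_{d+1},\ldots,a_n \in k\{a_1,\ldots,a_d,c\}_s$, so all generators of $A$ lie in $k\{a_1,\ldots,a_d,c\}_s$, giving $A \subseteq k\{a_1,\ldots,a_d,c\}_s \subseteq A_s$ and hence $A_s = (k\{a_1,\ldots,a_d,c\})_s = (\Imm(p\circ f))_s$. As $s \in k\{a_1,\ldots,a_d,c\} = \Imm(p\circ f)$, we have $s = p(f(Q))$ for a suitable $Q \in k\{y_1,\ldots,y_{d+1}\}$, which is the required element.

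The genuinely hard point is the compression step: that polynomial shifts of controlled degree turn $a_{d+1},\ldots,a_n$ into a single differential primitive element of $\Frac(A)$ over $F$. This is precisely the content of the polynomial shift machinery of \cite{me_primitive}; the adaptations specific to the present lemma are using the differentially transcendental element $a_1$ as the shift parameter in place of an element with derivative $1$ (legitimate exactly because $a_1, a_1', a_1'', \ldots$ are algebraically independent, which is why $d>0$ is needed) and descending the shift coefficients to $\mathbb{Q}$ by a rational point argument in the style of Lemma~\ref{lem:substitution}. Once $\Frac(A) = F\langle c\rangle$ is in hand, the passage to the stated localization equality is the routine denominator clearing carried out above.
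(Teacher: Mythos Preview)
Your proof is correct and follows essentially the same route as the paper: pick a differential transcendence basis $a_1,\ldots,a_d$, produce a primitive element of the form $a_{d+1}+\sum_{j>d+1}p_j(a_1)a_j$ with $p_j\in\mathbb{Q}[t]$, define $f$ accordingly, and clear denominators to obtain the localization equality. The only difference is that the paper obtains the primitive element by invoking Kolchin's primitive element theorem (which directly gives the $\mathbb{Q}[t]$-coefficients once one has a nonconstant $b\in F$, here $b=a_1$) as a black box, whereas you re-derive the same conclusion by adapting the polynomial-shift argument of \cite{me_primitive}; the paper's citation is cleaner, but your derivation is equally valid and lands on the identical $f$.
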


	\begin{proof}
		Without loss of generality we may assume that $a_1, \ldots, a_d$ constitute a differential transcendence basis of $A$.
        Let $E$ be the field of fractions of $A$ and let $F$ be the field of fractions of $k\{a_1, \ldots, a_d \} \subset A$.
        We want to apply Kolchin's primitive element theorem to the extension $F \subset E$.
        Actually, in his paper \cite{kolchin_primitive}, Kolchin  proved the following stronger statement:
        \begin{theorem_old}
			Let $F \subset E = F\langle a_1, \ldots, a_n\rangle$ be an extension of differential fields.
            Assume that $a_i$ is differentially algebraic over $F$ for all $i$.
            Let $b \in F$ be a nonconstant element, i.e. $b^{\prime} \neq 0$.
            Then, there exist polynomials $p_2(t), \ldots, p_n(t) \in \mathbb{Q}[t]$ such that:
            $$
            E = F\langle a_1 + p_2(b)a_2 + \ldots + p_n(b)a_n \rangle.
            $$
		\end{theorem_old}
        Thus, there exists $b_{d + 1} = a_{d + 1} + p_{d + 2}(a_1)a_{d + 2} + \ldots + p_n(a_1)a_n$, where $p_i(a_1) \in \mathbb{Q}\left[ a_1 \right]$, such that $E = F\langle b_{d + 1} \rangle$.
        Then, there exist differential polynomials $Q, T_{d + 1}, \ldots, T_{n} \in k\{ y_1, \ldots, y_{d + 1}\}$ such that for all $i > d$
        \begin{equation}\label{eq:project_to_d1}
        a_i = \frac{ T_i(a_1, \ldots, a_d, b_{d + 1}) }{ Q(a_1, \ldots, a_d, b_{d + 1}) }.
        \end{equation}
        Let $c = Q(a_1, \ldots, a_d, b_{d + 1})$.
        Then
        $$
        A_c = k\{ a_1, \ldots, a_d, b_{d + 1}\}_c
        $$
        So, we can set $f(y_i) = x_i$, $f(y_{d + 1}) = x_{d + 1} + p_{d + 2}(x_1)x_{d + 2} + \ldots + p_n(x_1)x_n$.
	\end{proof}

	\begin{remark}\label{rem:factorization}
		The resulting homomorphism can be factored as $f = a\circ i$, where $i \colon k\{ y_1, \ldots, y_{d + 1} \} \to k\{ y_1, \ldots, y_n\}$ is defined by $i(y_i) = y_i$ for all $i \leqslant d + 1$, and $a \colon k\{ y_1, \ldots, y_{n} \} \to k\{ x_1, \ldots, x_n\}$ is a differential isomorphism defined by $a(y_i) = f(y_i)$ for all $i \leqslant d + 1$ and $a(y_i) = x_i$ for all $i > d + 1$.
	\end{remark}

    
    The following lemma replaces the subset of the $d + 1$-dimensional affine space produced by Lemma~\ref{lem:project_to_d1} with simpler one.
    For application, see Lemma~\ref{lem:two_polynomials_geom}.
    
    \begin{lemma}\label{lem:two_polynomials}
		Let $I \subset k\{ y_1, \ldots, y_{d + 1} \}$ be a nonzero prime differential ideal such that $y_1, \ldots, y_d$ are differentially independent modulo $I$.
        Let $Q$ be a differential polynomial such that $Q \notin I$.
        
        Then, there exist differential polynomials $P, S \in k\{ y_1, \ldots, y_{d + 1} \}$ such that
        \begin{enumerate}
			\item $\ord_{y_{d + 1}} P > \ord_{y_{d + 1}} S$.
            \item for every homomorphism (not necessarily differential) $f \colon k\{ y_1, \ldots, y_{d + 1} \} \to B$, where $B$ is a domain, if $[P] \subset \Ker f$ and $f(S) \neq 0$, then $I \subset \Ker f$ and $Q \notin \Ker f$.
		\end{enumerate}
	\end{lemma}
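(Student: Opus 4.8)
The plan is to realize $I$ as a "codimension one" ideal in the variable $y_{d+1}$ and then turn the usual characteristic–set data (an irreducible polynomial and its separant) into the pair $(P,S)$, with one twist needed to make the order inequality hold. Since $I\neq 0$ and $y_1,\dots,y_d$ are differentially independent modulo $I$, we have $I\cap k\{y_1,\dots,y_d\}=0$, so every nonzero element of $I$ involves $y_{d+1}$; hence $y_{d+1}$ is differentially algebraic modulo $I$ over the fraction field of the image of $k\{y_1,\dots,y_d\}$, and the differential transcendence degree of $k\{y_1,\dots,y_{d+1}\}/I$ equals $d=(d+1)-1$. So Lemma~\ref{lem:hypersurface} applies, and because $y_1,\dots,y_d$ are differentially independent modulo $I$ its proof produces an irreducible $P_0$ with $h:=\ord_{y_{d+1}}P_0\geqslant 0$, minimal among orders of nonzero elements of $I$, such that $I=[P_0]:S_0^{\infty}$, where $S_0$ is the separant of $P_0$ with respect to $y_{d+1}$. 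The naive choice $P:=P_0$, $S:=S_0$ satisfies everything except possibly condition~(1): $\ord_{y_{d+1}}S_0=h$ whenever $\deg_{y_{d+1}^{(h)}}P_0\geqslant 2$. The remedy is to replace $S_0$ by a resultant of lower order in $y_{d+1}$ that is nonetheless forced to be nonzero together with $S_0$.

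Concretely, I would introduce two auxiliary polynomials over the UFD $k\{y_1,\dots,y_d\}[y_{d+1},\dots,y_{d+1}^{(h-1)}]$, viewed as a ring of polynomials in $y_{d+1}^{(h)}$. First, let $D_0$ be the resultant of $P_0$ and $S_0$ with respect to $y_{d+1}^{(h)}$; since $\ord_{y_{d+1}}P_0=h$, the polynomial $P_0$ has positive degree in $y_{d+1}^{(h)}$, so (characteristic zero) $S_0=\partial P_0/\partial y_{d+1}^{(h)}\neq 0$, and as $P_0$ is irreducible it shares no divisor of positive degree in $y_{d+1}^{(h)}$ with $S_0$; by Remark~\ref{rem:resultant}, $D_0\neq 0$ and $D_0=G_2P_0+H_2S_0$ for some $G_2,H_2$, while $\ord_{y_{d+1}}D_0<h$. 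Second, apply Lemma~\ref{lem:partial_reduction} to $Q$ to get $\widetilde Q$ with $\ord_{y_{d+1}}\widetilde Q\leqslant h$ and $W:=S_0^{N}Q-\widetilde Q\in[P_0]$; since $Q\notin I=[P_0]:S_0^{\infty}$, also $\widetilde Q\notin I$ (otherwise $S_0^{M}\widetilde Q\in[P_0]$ would give $S_0^{M+N}Q\in[P_0]$), and in particular $P_0\nmid\widetilde Q$. Let $R$ be the resultant of $P_0$ and $\widetilde Q$ with respect to $y_{d+1}^{(h)}$; again by Remark~\ref{rem:resultant}, $R\neq 0$, $R=G_1P_0+H_1\widetilde Q$, and $\ord_{y_{d+1}}R<h$; note $R\notin I$ since every nonzero element of $I$ has order $\geqslant h$ in $y_{d+1}$. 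Finally set $P:=P_0$ and $S:=D_0R$.

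It remains to check the two conclusions. Condition~(1) is immediate: $\ord_{y_{d+1}}S=\max(\ord_{y_{d+1}}D_0,\ord_{y_{d+1}}R)<h=\ord_{y_{d+1}}P$. For condition~(2), let $f\colon k\{y_1,\dots,y_{d+1}\}\to B$ be a homomorphism into a domain $B$ with $[P_0]\subseteq\Ker f$ and $f(S)\neq 0$; then $f(D_0)\neq 0$ and $f(R)\neq 0$. Applying $f$ to $D_0=G_2P_0+H_2S_0$ and using $f(P_0)=0$ forces $f(S_0)\neq 0$; hence for every $a\in I=[P_0]:S_0^{\infty}$, choosing $N_a$ with $S_0^{N_a}a\in[P_0]\subseteq\Ker f$, we get $f(S_0)^{N_a}f(a)=0$ in the domain $B$, so $f(a)=0$; thus $I\subseteq\Ker f$. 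Applying $f$ to $R=G_1P_0+H_1(S_0^{N}Q-W)$ and using $f(P_0)=f(W)=0$ gives $f(R)=f(H_1)f(S_0)^{N}f(Q)$, whence $f(Q)\neq 0$, i.e.\ $Q\notin\Ker f$. (Since $I$ is prime and $S=D_0R\notin I$, the hypotheses of~(2) are non-vacuous, e.g.\ for the projection $k\{y_1,\dots,y_{d+1}\}\to k\{y_1,\dots,y_{d+1}\}/I$.)

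The step I expect to be the real obstacle is exactly the tension exploited above: condition~(1) wants $S$ to have small order in $y_{d+1}$, whereas $S$ must still carry enough information to recover both the whole ideal $I$ and the non-vanishing of $Q$ — and the natural carrier, the separant, is of order $h$ as soon as $P_0$ is not of degree one in its leading derivative. The resolution is that we do not need the separant in $S$, only an element whose non-vanishing (under a homomorphism killing $[P]$) forces that of the separant; the resultant $D_0$ of $P_0$ with its separant is such an element and has strictly smaller order, and an entirely parallel resultant construction $R$ handles $Q$. Everything else is routine bookkeeping with the ideal $[P_0]:S_0^{\infty}$ supplied by Lemmas~\ref{lem:partial_reduction} and~\ref{lem:hypersurface}.
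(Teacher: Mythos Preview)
Your proof is correct and follows essentially the same route as the paper: obtain $I=[P]:S_P^{\infty}$ from Lemma~\ref{lem:hypersurface}, reduce $Q$ to $\widetilde Q$ via Lemma~\ref{lem:partial_reduction}, and then pass to a resultant with respect to $y_{d+1}^{(h)}$ to drop the order of $S$ below $h$. The only cosmetic difference is that the paper takes a single resultant of $P$ with the product $S_P\widetilde Q$, whereas you take the product $D_0R$ of the two separate resultants of $P_0$ with $S_0$ and with $\widetilde Q$; by multiplicativity of the resultant these choices are equivalent, and the verification of condition~(2) is identical in spirit.
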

    
    \begin{proof}
		Lemma~\ref{lem:hypersurface} implies that $I = [P] : S_P^{\infty}$, where $P \in k\{y_1, \ldots, y_{d + 1}\}$ is irreducible and $S_P$ is the separant of $P$ with respect to $y_{d + 1}$.
        Lemma~\ref{lem:partial_reduction} implies that there exists $\widetilde{Q} \in k\{y_1, \ldots, y_{d + 1}\}$ such that $\ord_{y_{d + 1}} \widetilde{Q} \leqslant \ord_{ y_{d + 1} } P$ and $S_P^NQ - \widetilde{Q} \in [P]$ for some $N$.
        Since $Q\notin I$, $\widetilde{Q}$ is not zero, and $\widetilde{Q}$ is not divisible by $P$.
        
        By $h$ we denote the order of $P$ with respect to $y_{d + 1}$.
        Let $R$ be the resultant of $P$ and $S_P \widetilde{Q}$ with respect to $y_{d + 1}^{(h)}$.
        Then, $\ord_{y_{d + 1}} R < h$.
        Since $P$ is irreducible and neither $S_P$ nor $\widetilde{Q}$ is divisible by $P$, it follows that $R \neq 0$ (see Remark~\ref{rem:resultant}).
        
        Let $f \colon k\{ y_1, \ldots, y_{d + 1} \} \to B$ be an arbitrary homomorphism to a domain such that $f([P]) = 0$ and $f(R) \neq 0$.
        Then, $f(S_P) \neq 0$ and $f(\widetilde{Q}) \neq 0$ (see Remark~\ref{rem:resultant}).
        Let $T \in I$, then there exists $n$ such that $S_P^n T \in [P]$.
        Applying $f$, we obtain that $f(S_P)^n f(T) = 0$.
        Since there are no zero divisors in $B$ and $f(S_P) \neq 0$, $f(T) = 0$.
		Moreover, applying $f$ to $\widetilde{Q} - S_P^NQ \in [P]$ we obtain $f(Q) f(S_P)^N = f(\widetilde{Q}) \neq 0$, so $Q \notin \Ker f$.
	\end{proof}
    

	The following two lemmas constitute the core of the proof of Theorem~\ref{th:alg_geom}.
    For application, see Lemma~\ref{lem:shifting_geom}.

    \begin{lemma}\label{lem:high_order}
		Let $P, S \in k\{ y_1, \ldots, y_{d + 1} \}$, $\ord_{y_{d + 1}} P > \ord_{y_{d + 1}} S$ and $P$ is irreducible.
        Then, there exists a differential automorphism $f_1 \colon k\{ y_1, \ldots, y_{d + 1} \} \to k\{ y_1, \ldots, y_{d + 1} \}$ such that the following inequalities hold:
        \begin{enumerate}
 			\item $\ord_{y_{d + 1}} f_1(P) > \ord_{y_{d + 1}} f_1(S)$;
            \item $\ord_{y_{d + 1}} f_1(P) > \max\left(\ord_{y_i} f_1(P), \ord_{y_i} f_1(S)\right)$ for all $i \leqslant d$.
		\end{enumerate}
	\end{lemma}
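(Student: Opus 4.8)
The plan is to build $f_1$ as a composition of two families of elementary ``triangular'' differential automorphisms. Recall that for any $g\in k\{y_1,\ldots,y_d\}$ the map fixing $y_1,\ldots,y_d$ and sending $y_{d+1}\mapsto y_{d+1}+g$ is a differential automorphism (inverse $y_{d+1}\mapsto y_{d+1}-g$), and symmetrically $y_i\mapsto y_i+h_i(y_{d+1})$, $y_{d+1}\mapsto y_{d+1}$ is one as well. I would first use a shift of the form $y_{d+1}\mapsto y_{d+1}+\sum_{i\le d}y_i^{(N)}$ to ``push'' every $y_{d+1}$-derivative occurring in $P$ and $S$ into high-order derivatives of $y_1,\ldots,y_d$, and then use shifts $y_i\mapsto y_i+y_{d+1}^{(r_i)}$ to ``push'' everything back into $y_{d+1}$, arranging that $P$ keeps the strictly highest order.

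Concretely, let $h=\ord_{y_{d+1}}P$ (note $h\geq 0$, since $P$ must involve $y_{d+1}$), and pick $N$ larger than all of $\ord_{y_i}P$, $\ord_{y_i}S$ ($i\le d$) and $\ord_{y_{d+1}}S$. Let $\rho$ send $y_{d+1}\mapsto y_{d+1}+\sum_{i=1}^d y_i^{(N)}$ and fix the other variables. Writing $P$ as a polynomial in $y_{d+1}^{(h)}$ and inspecting its leading coefficient, one checks (this is the only place where a little care is needed, see below) that $\ord_{y_i}\rho(P)=N+h$ for every $i\le d$ while $\ord_{y_{d+1}}\rho(P)=h$; and since $\ord_{y_{d+1}}S<h$, the same substitution forces $\ord_{y_i}\rho(S)\le N+h-1$ and $\ord_{y_{d+1}}\rho(S)<h$. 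So after applying $\rho$ the image of $P$ strictly dominates the image of $S$ in each of the variables $y_1,\ldots,y_d$, with a gap of exactly $1$ in those orders.

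Next I would put $m=N+h$, choose integers $r_1>r_2>\cdots>r_d\geq 1$, let $\sigma$ send $y_i\mapsto y_i+y_{d+1}^{(r_i)}$ ($i\le d$) and fix $y_{d+1}$, and take $f_1=\sigma\circ\rho$. Viewing $\rho(P)$ as a polynomial in the single variable $y_1^{(m)}$, its leading coefficient does not involve $y_1^{(m)}$, so after applying $\sigma$ the derivative $y_{d+1}^{(r_1+m)}$ enters only through $y_1^{(m)}\mapsto y_1^{(m)}+y_{d+1}^{(r_1+m)}$; since $r_1>r_2>\cdots>r_d$, nothing else reaches order $r_1+m$ in $y_{d+1}$, so this term survives and $\ord_{y_{d+1}}f_1(P)=r_1+m=r_1+N+h$. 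The remaining inequalities are then pure bookkeeping: $\sigma$ adds only $y_{d+1}$-terms to $y_1,\ldots,y_d$, so for $i\le d$ we get $\ord_{y_i}f_1(P)\le m$ and $\ord_{y_i}f_1(S)\le m-1$, both $<r_1+m$; and $\ord_{y_{d+1}}f_1(S)\le\max\bigl(\ord_{y_{d+1}}\rho(S),\ \max_{i\le d}(r_i+\ord_{y_i}\rho(S))\bigr)\le r_1+m-1<r_1+m$. Hence both conclusions hold.

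The one genuinely delicate point — the main obstacle — is the non-cancellation of the top term. After $\rho$, all of $y_1,\ldots,y_d$ occur in $\rho(P)$ with the \emph{same} order $m$; if we then absorbed them using a common shift $y_i\mapsto y_i+y_{d+1}^{(r)}$, the coefficient of the top $y_{d+1}$-derivative would be a sum of several coefficients of $\rho(P)$ which might vanish (already $y_1^{(m)}-y_2^{(m)}$ loses all dependence on $y_{d+1}$ under such a common shift). Taking the shift amounts $r_1>\cdots>r_d$ strictly decreasing circumvents this: ``leading with $y_1$'' replaces that sum by the single leading coefficient of $\rho(P)$ as a polynomial in $y_1^{(m)}$, which is nonzero because $\rho$ is an automorphism. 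The analogous observation — take the leading coefficient with respect to one variable rather than collecting like powers across several — is also what makes the first step ($\rho$) go through. Everything else is a routine check of how orders behave under these triangular substitutions.
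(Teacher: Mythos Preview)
Your argument is correct: the two-stage composition $\sigma\circ\rho$ does the job, and your handling of the potential cancellation (via the strictly decreasing shifts $r_1>\cdots>r_d$) is exactly the right fix. The order computations all check out; in particular your observation that $\rho$ and $\rho^{-1}$ both do not increase $\ord_{y_{d+1}}$, hence preserve it, and that $\rho(I_0)$ has $y_i$-order strictly below $m$ so cannot interfere with the top $y_{d+1}$-term after $\sigma$, are the key points and they are sound.

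That said, the paper's route is considerably shorter. Instead of first pushing $y_{d+1}$ out into \emph{all} of $y_1,\ldots,y_d$ and then pushing each $y_i$ back with a carefully staggered $r_i$, the paper uses a single automorphism that simultaneously swaps $y_1$ with $y_{d+1}$ and shifts: with $N>\max(\ord P,\ord S)$, set $f_1(y_{d+1})=y_1+y_{d+1}^{(N)}$, $f_1(y_1)=y_{d+1}$, and $f_1(y_i)=y_i$ for $1<i\le d$. Then $\ord_{y_{d+1}}f_1(P)=N+h$ and $\ord_{y_{d+1}}f_1(S)=N+\ord_{y_{d+1}}S$ (or $<N$ if $S$ is free of $y_{d+1}$), while $\ord_{y_i}f_1(P),\ord_{y_i}f_1(S)<N$ for all $i\le d$. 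The swap neatly avoids your ``main obstacle'': there is only one variable feeding the top $y_{d+1}$-derivative, so no staircase of distinct $r_i$'s is needed and the non-cancellation is immediate. Your construction, on the other hand, stays within the class of ``triangular'' shifts (no variable swap), which is conceptually pleasant and perhaps more reusable, at the cost of a longer argument. Neither proof actually uses the irreducibility of $P$.
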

        
    \begin{proof}
		Let us choose a natural number $N > \max\left( \ord P, \ord S \right)$.
        We define $f_1$ by $f_1(y_{d + 1}) = y_{1} + y_{d + 1}^{(N)}$, $f_1(y_1) = y_{d + 1}$ and $f_1(y_i) = y_i$ for all $1 < i \leqslant d$.
            
        Let $h = \ord_{y_{d + 1}} P$ and $D = \deg_{y_{d + 1}^{(h)}} P$.
        Since $\left( y_{d + 1}^{(h + N)} \right)^D$ occurs in $f_1(P)$ only in $f_1\left( y_{d + 1}^h \right) = \left( y_{1}^{(h)} + y_{d + 1}^{(N + h)} \right)^D$, it can not cancel out.
        Hence, $\ord_{y_{d + 1}} f_1(P) = N + h$.
        Similarly, $\ord_{y_{d + 1}} f_1(S) = N + \ord_{y_{d + 1}} S$.
        Moreover, since $\max\left(\ord_{y_i} f_1(P), \ord_{y_i} f_1(S)\right) < N$ for all $i \leqslant d$, the desired inequalities hold.
	\end{proof}
 
 	\begin{definition}
		Let us consider a differential polynomial $Q(y_1, \ldots, y_n) \in k\{ y_1, \ldots, y_{d + 1}\}$ as a differential polynomial in $y_{d + 1}$ over $k\{ y_1, \ldots, y_{d} \}$.
		Then, $Q$ is said to be \textit{$y_{d + 1}$-manageable} if at least one coefficient lies in $k^{*}$.
	\end{definition}
        
    For example, $P = 2y_{d + 1}y_{d + 1}^{\prime\prime} - y_{d + 1}y_1 + \left( y_1^{\prime} \right)^2$ is $y_{d + 1}$-manageable differential polynomial, while $Q = 2y_{d + 1}y_{d + 1}^{\prime\prime} + y_{d + 1}y_{d + 1}^{\prime\prime}y_1 + \left( y_1^{\prime} \right)^2$ is not.
    
	\begin{lemma}\label{lem:manageable}
		Let $Q(y_1, \ldots, y_{d + 1}) \in k\{ y_1, \ldots, y_{d + 1}\}$.
		Then, there exists an automorphism $f_2$ of $k\{ y_1, \ldots, y_{d + 1}\}$ of the form $f_2(y_i) = y_i + p_i(y_{d + 1})$, where $p_i(t) \in \mathbb{Q}[t]$, for $i \leqslant d$ and $f_2(y_{d + 1}) = y_{d + 1}$ such that $f_2(Q)$ is $y_{d + 1}$-manageable.
	\end{lemma}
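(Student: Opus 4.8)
The plan is to take $f_2$ of the special form $f_2(y_i)=y_i+y_{d+1}^{m_i}$ for $1\le i\le d$ and $f_2(y_{d+1})=y_{d+1}$, for a suitably rapidly increasing tuple $m_1<\dots<m_d$ of positive integers, and to follow the effect of this substitution through the grading $\delta$ on $k\{y_1,\dots,y_{d+1}\}$ in which $\delta\bigl(y_{d+1}^{(j)}\bigr)=1$ for all $j$ while $\delta\bigl(y_i^{(j)}\bigr)=0$ for $i\le d$. (We may assume $Q\neq 0$.) Such an $f_2$ is indeed a differential automorphism: $y_i\mapsto y_i-y_{d+1}^{m_i}$, $y_{d+1}\mapsto y_{d+1}$ is a two-sided inverse, and $f_2$ commutes with the derivation because each $D^j\bigl(y_{d+1}^{m_i}\bigr)$ is a differential polynomial in $y_{d+1}$ with rational coefficients. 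The point of using powers of $y_{d+1}$ is that $D^j\bigl(y_{d+1}^{m_i}\bigr)$ is $\delta$-homogeneous of degree $m_i$.

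Write $Q=\sum_N c_N\,N$, where $N$ runs over the monomials in the variables $y_i^{(j)}$, $i\le d$, and $c_N\in k\{y_{d+1}\}$. For $N=\prod_\kappa y_{i_\kappa}^{(j_\kappa)}$ one has $f_2(N)=\prod_\kappa\bigl(y_{i_\kappa}^{(j_\kappa)}+D^{j_\kappa}(y_{d+1}^{m_{i_\kappa}})\bigr)$; the only term of the expansion free of $y_1,\dots,y_d$ is $\pi_N:=\prod_\kappa D^{j_\kappa}(y_{d+1}^{m_{i_\kappa}})\in k\{y_{d+1}\}\setminus\{0\}$, which is $\delta$-homogeneous of degree $w(N):=\sum_\kappa m_{i_\kappa}$, while every other term retains at least one factor $y_{i_\kappa}^{(j_\kappa)}$ and so has $\delta$-degree $<w(N)$. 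Letting $\widehat{c_N}$ be the $\delta$-leading form of $c_N$ and $D^\ast:=\max_N\bigl(\deg_\delta c_N+w(N)\bigr)$, it follows that the $\delta$-homogeneous component of $f_2(Q)$ of degree $D^\ast$ equals
\[
\Theta:=\sum_{\deg_\delta c_N+w(N)=D^\ast}\widehat{c_N}\,\pi_N ,
\]
and that it contains none of $y_1,\dots,y_d$. Now suppose $\Theta\neq 0$ and take any monomial $M$ occurring in $\Theta$ (it has $\delta$-degree $D^\ast$). For any monomial $N'\neq 1$ in $y_1,\dots,y_d$ the monomial $MN'$ again has $\delta$-degree $D^\ast$, so its occurrence in $f_2(Q)$ would put it in the degree-$D^\ast$ component $\Theta$ — impossible, since $\Theta$ is free of $y_1,\dots,y_d$. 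Hence, reading $f_2(Q)$ as a differential polynomial in $y_{d+1}$ over $k\{y_1,\dots,y_d\}$, the coefficient of $M$ equals the coefficient of $M$ in $\Theta$, which is a nonzero element of $k$; thus $f_2(Q)$ is $y_{d+1}$-manageable.

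It remains to choose $m_1,\dots,m_d$ with $\Theta\neq 0$, and this is the step I expect to be the main obstacle. If $m_1$ exceeds $\deg_\delta c_N$ for all $N$ and $m_1,\dots,m_d$ grow fast enough relative to the degrees in $Q$, the maximizers of $\deg_\delta c_N+w(N)$ are precisely the monomials $N$ with one fixed multidegree $(\deg_1 N,\dots,\deg_d N)$ (lexicographically largest, from $\deg_d$ downward) and, among those, with the largest value of $\deg_\delta c_N$; calling this set $\mathcal N$, we get $\Theta=\sum_{N\in\mathcal N}\widehat{c_N}\,\pi_N$, with all $\pi_N$ of the same $\delta$-degree $w$, and cancellation in this sum must be excluded. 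Using the identity $D^j\bigl(y_{d+1}^{m}\bigr)=y_{d+1}^{m}\,\Psi_j(m)$ — with $\Psi_0=1$ and, for $j\ge 1$, $\Psi_j$ a polynomial in $m$ of degree $j$ whose leading coefficient is $\bigl(y_{d+1}^{-1}y_{d+1}'\bigr)^j$ — one gets $\Theta=y_{d+1}^{\,w}\,\Xi$ with $\Xi=\sum_{N\in\mathcal N}\widehat{c_N}\prod_\kappa\Psi_{j_\kappa}(m_{i_\kappa})$; after clearing denominators $\Xi$ becomes a polynomial in $m_1,\dots,m_d$ with coefficients in $k[y_{d+1},y_{d+1}',y_{d+1}'',\dots]$, so it suffices to prove it is not identically zero when $m_1,\dots,m_d$ are treated as indeterminates — and then $\Xi\neq 0$ off a proper subvariety of $\mathbb A^d$, and the rapidly increasing tuples are Zariski dense, so some such tuple avoids it. Proving this non-vanishing is the technical heart: one analyzes the factorization of the $\pi_N$ (equivalently of the $\Psi_j$) over the field $k(y_{d+1},y_{d+1}',y_{d+1}'',\dots)$, where the distinct orders occurring in the various $N$ separate the $\pi_N$ up to units, and characteristic zero enters decisively — through $D\bigl(y_{d+1}^m\bigr)=m\,y_{d+1}^{m-1}y_{d+1}'$ with $m\neq 0$, which is what keeps the relevant subleading coefficients of the $\Psi_j$ nonzero. (Allowing general $p_i(t)\in\mathbb Q[t]$ with generic coefficients instead of monomials, in the spirit of Lemma~\ref{lem:substitution}, gains nothing here, since only the top coefficient of $p_i$ affects $\Theta$.)
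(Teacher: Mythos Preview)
Your reduction to the statement ``$\Theta\neq 0$'' is clean and correct, but the step you flag as ``the technical heart'' --- the non-vanishing of $\Xi$ as a polynomial in $m_1,\dots,m_d$ --- is not merely unproven: it is \emph{false}. Take $d=1$ and
\[
Q \;=\; \bigl((y_2')^2 - y_2 y_2''\bigr)\,y_1 y_1' \;-\; y_2 y_2'\,(y_1')^2 \;+\; y_2 y_2'\,y_1 y_1'' \;\in\; k\{y_1,y_2\}.
\]
Here every monomial $N$ has $y_1$-degree $2$ and every $c_N$ has $\delta$-degree $2$, so $\mathcal N$ is all three monomials and $\widehat{c_N}=c_N$ for every $m$. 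One checks (or observes that $Q=(y_1y_2')^2\,\frac{d}{dt}\bigl(\tfrac{y_1'/y_1}{y_2'/y_2}\bigr)$, and $\tfrac{y_1'/y_1}{y_2'/y_2}\equiv m$ when $y_1=y_2^m$) that
\[
\Theta(m)\;=\;Q(y_2^m,y_2)\;=\;0 \qquad\text{for every integer }m\ge 1,
\]
so $\Xi\equiv 0$. In fact no shift $y_1\mapsto y_1+c\,y_2^m$ makes this $Q$ $y_2$-manageable, since $f_2(Q)=Q(y_1,y_2)+B(y_1,y_2^m)$ with $B$ linear in $y_1$-derivatives; every coefficient still carries a $y_1$-factor. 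The lemma is nevertheless true for this $Q$: taking $p_1(t)=t^2+t$ one computes $Q(y_2^2+y_2,\,y_2)=y_2^2(y_2')^3$, which sits alone in $\delta$-degree $5$ of $f_2(Q)$ and supplies a constant coefficient.

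This also shows that your closing remark is the wrong lesson. It is true that only the top coefficient of $p_i$ affects \emph{your} $\Theta$ --- but that is a defect of isolating the $\delta$-leading part, not a fact about the problem. The paper's argument avoids the $\delta$-leading part entirely: it shifts by $y_{d+1}^N\,p_i(y_{d+1})$ with a \emph{single} large $N$ and low-degree $p_i$, introduces auxiliary variables $\Lambda_{r,i}$ with $\Lambda_{r,i}'=y_{d+1}'\Lambda_{r+1,i}$ standing in for the derivatives of $p_i(y_{d+1})$, and separates by $\Lambda$-degree rather than $\delta$-degree. Since the $\Lambda_{r,i}$ are genuinely independent, the top-$\Lambda$-degree part $\widetilde Q_+$ is visibly nonzero (the monomial of maximal weight survives), and a simple total-degree count keeps $g(\widetilde Q_+)$ and $g(\widetilde Q_-)$ from interfering after one specializes $\Lambda_{r,i}\to p_i^{(r)}(y_{d+1})$ via Lemma~\ref{lem:substitution}. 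The extra freedom in the $p_i$ is exactly what your monomial ansatz discards.
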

    
	\begin{proof}
    	The following proof uses the same ``differential shifting'' trick as the proof of Theorem 1 in \cite{me_primitive}.
		By $D_1$ denote the total degree of $Q$ with respect to $y_{d + 1}$ and its derivatives.
		By $D_2$ denote the total degree of $Q$ with respect to $y_1, \ldots, y_{d}$ and their derivatives.
        (for example, if $Q = y_{d + 1} y_{d + 1}^{\prime\prime}y_1^2 + y_{d + 1}y_1^{\prime}y_2^4$, then $D_1 = 2$ and $D_2 = 5$).
		Let $N = D_1 + D_2\ord Q + 1$.
		Let $V_{\Lambda} = \{\Lambda_{i, j} | i \geqslant 0, 1 \leqslant j \leqslant d\}$ be a set of algebraically independent variables.
		We extend the derivation from $k\{ y_1, \ldots, y_{d + 1}\}$ to $A = k\{ y_1, \ldots, y_{d + 1} \}\left[V_{\Lambda} \right]$ by $\left( \Lambda_{i, j} \right)^{\prime} = y_{d + 1}^{\prime}\Lambda_{i + 1, j}$.
		Let us construct $f_2$ as a composition of differential homomorphisms $k\{y_1, \ldots, y_{d + 1}\} \xrightarrow{i} A \xrightarrow{g} k\{ y_1, \ldots, y_{d + 1}\}$, where 
        \begin{itemize}
        	\item $i$ is a monomorphism defined by $i(y_j) = y_j + y_{d + 1}^N \Lambda_{0, j}$ for $j \leqslant d$ and $i(y_{d + 1}) = y_{d + 1}$;
            \item $g$ is defined by $g(y_i) = y_i$ and $g(\Lambda_{i, j}) = p_j^{(i)}(y_{d + 1})$, where $p_j(t) \in \mathbb{Q}[t]$ and $\deg p_j \leqslant \ord Q$ ($j = 1, \ldots, d$).
        \end{itemize}

		Let us rewrite $\widetilde{Q} = i(Q) \in A$ as a sum $\widetilde{Q}_{+} + \widetilde{Q}_{-}$, where $\widetilde{Q}_{+}$ includes all monomials of degree $D_2$ with respect to variables $\Lambda_{i, j}$ and $\widetilde{Q}_{-} = \widetilde{Q} - \widetilde{Q}_{+}$.
		Since $D_2$ is the maximal possible degree with respect to variables $\Lambda_{i, j}$, $\widetilde{Q}_{+}$ does not depend on $y_1, \ldots, y_{d}$.

		Let us show that $\widetilde{Q}_{+} \neq 0$.
		Consider $Q$ as a polynomial in $y_1, \ldots, y_{d}$ over $k\{ y_{d + 1} \}$.
		Among the monomials of maximal degree consider the monomial of maximal weight, say, $M = A(y_{d + 1}) y_{i_1}^{(k_1)}\cdot \ldots \cdot y_{i_l}^{(k_l)}$, where $A(y_{d + 1}) \in k\{ y_{d + 1} \}$.
		Then, $\widetilde{Q}_{+}$ includes the monomial 
        $$A(y_{d + 1})y_{d + 1}^{ND_2}\left( y_{d + 1}^{\prime}\right)^{k_1 + \ldots + k_l}\Lambda_{k_1, i_1}\cdot\ldots\cdot\Lambda_{k_l, i_l}$$ 
        which can not cancel out.

		The total degree of $g( \widetilde{Q}_{-} )$ with respect to $y_{d + 1}$ and its derivatives does not exceed $(D_2 - 1)(N + \ord Q) + D_1$.
		On the other hand, the total degree of every monomial of $g( \widetilde{Q}_{+} )$ is at least $ND_2$.
		The definition of $N$ implies that $ND_2 > D_1 + (D_2 - 1)(N + \ord Q)$.
        Hence, the set of monomials of $g( \widetilde{Q}_{+} )$ does not intersect with the set of monomials of $g( \widetilde{Q}_{-} )$ for all $g$.
        Since $g( \widetilde{Q}_{+} )$ does not involve $y_1, \ldots, y_d$, it suffices to find $p_j(t)$ ($j = 1, \ldots, d$) such that $g(\widetilde{Q}_{+}) \neq 0$.
		Let us define a derivation on the field of fractions of $A$ by $D(z) = \frac{z^{\prime}}{y_{d + 1}^{\prime}}$.
		By the definition, $D(y_{d + 1}) = 1$ and $D(\Lambda_{i, j}) = \Lambda_{i + 1, j}$.
		Then, due to Corollary~\ref{cor:substitution}, there exist $p_j$ ($j = 1, \ldots, d$) such that $g( \widetilde{Q}_{+} ) \neq 0$.
	\end{proof}


	\section{Geometric version}\label{sec:geometric}

	\begin{theorem}\label{th:alg_geom}
    	Let $k$ be a differential field, and $R$ be a differential $k$-algebra with sufficiently many points.
		Let $X \subset \mathbb{A}^n(R)$ be a differential algebraic $R$-variety defined over $k$ of dimension $d > 0$.
        Then, there exists a $\mathbb{Q}$-map $\varphi \colon \mathbb{A}^n(R) \to \mathbb{A}^d(R)$ such that $\varphi(X) = \mathbb{A}^d(R)$.
	\end{theorem}

	\begin{corollary}\label{cor:alg_geom}
     	Let $K$ be a differentially closed field.
 		Let $X \subset \mathbb{A}^n(K)$ be a differential algebraic variety of dimension $d > 0$.
        Then, there exists a $\mathbb{Q}$-map $\varphi \colon \mathbb{A}^n(K) \to \mathbb{A}^d(K)$ such that $\varphi(X) = \mathbb{A}^d(K)$.
 	\end{corollary}

    \begin{corollary}\label{cor:infty}
    	Let $k$ be an algebraically closed field.
		Let $X \subset \mathbb{A}^n(k_{\infty})$ be a differential algebraic $k_{\infty}$-variety of dimension $d > 0$ defined over $k$.
        Then, there exists a $\mathbb{Q}$-map $\varphi \colon \mathbb{A}^n(k_{\infty}) \to \mathbb{A}^d(k_{\infty})$ such that $\varphi\left(X \right) = \mathbb{A}^d(k_{\infty})$.
	\end{corollary}


	\begin{proof}

		By $\varepsilon\colon R \to k_0$ we denote the residue map.
        Then, $\varepsilon(a) \neq 0$ for $a \in R$ if and only if $a$ is invertible.
		For a differential polynomial $Q \in k\{ x_1, \ldots, x_n \}$ we define $V_{\varepsilon} (Q)$ by
    	$$V_{\varepsilon} (Q) = \{ p \in \mathbb{A}^n(R) \mid \varepsilon\left( Q(p) \right) = 0 \} \subset \mathbb{A}^n(R)$$

		\underline{Step~1: Map to $\mathbb{A}^{d + 1}(R)$}.
        \begin{lemma}\label{lem:project_to_d1_geom}
			Let $X \subset \mathbb{A}^n (R)$ be an irreducible over $k$ differential algebraic $R$-variety of dimension $d > 0$.
	        Then, there exists a $\mathbb{Q}$-map $\varphi \colon \mathbb{A}^n(R) \to \mathbb{A}^{d + 1}(R)$ such that $\varphi(X)$ contains a nonempty set of the form $Y \backslash V_\varepsilon (Q)$, where $Q(y_1, \ldots, y_{d + 1}) \in k\{y_1, \ldots, y_{d + 1}\}$ and $Y \subset \mathbb{A}^{d + 1}(R)$ is an irreducible over $k$ differential algebraic $R$-variety of codimesion one.
		\end{lemma}
    
    	\begin{proof}
            By $A$ denote the $k$-coordinate ring of $X$.
        	Let $p \colon k\{ x_1, \ldots, x_n\} \to k\{x_1, \ldots, x_n\} / I_k(X) = A$ be the canonical projection.
        	Due to Lemma~\ref{lem:project_to_d1} there exists an injective differential homomorphism $f \colon k\{ y_1, \ldots, y_{d + 1} \} \to k\{ x_1, \ldots, x_n \}$ such that the corresponding map $f^{\#} \colon \mathbb{A}^n(R) \to \mathbb{A}^{d + 1}(R)$ is defined over $\mathbb{Q}$ and there exists $Q \in k\{ y_1, \ldots, y_{d + 1}\}$ such that $A_{p(f(Q))} = \left( \Imm (p \circ f) \right)_{p(f(Q))}$ 
        
	        Let $Y \subset \mathbb{A}^{d + 1} (R)$ be the closure of $f^{\#}(X)$ with respect to the Kolchin topology.
	        Then, the $k$-coordinate ring of $Y$ is $\Imm (p\circ f) \subset A$, so $Y$ is irreducible.
        	Since the $k$-coordinate rings of $Y$ and $A$ become isomorphic after localization with respect to $p(f(Q))$, $\dim Y = d$.
            Furthermore, for every point $q_0 \in Y$ such that $Q(q_0) \in R$ is invertible element of $R$ (equivalently, $\varepsilon\left(Q(q_0)\right) \neq 0$) the corresponding homomorphism to $R$ can be extended to a $k$-homomorphism $A_{p(f(Q))} \to R$.
            Thus, there exists a point $q_1 \in X$ such that $f^{\#}(q_1) = q_0$.
	        Hence, $f^{\#}(X) \supseteq Y \backslash V_{\varepsilon}(Q)$.
	        The set $Y \backslash V_{\varepsilon}(Q)$ is nonempty because there exists at least one differential $k$-homomorphism $A_{p(f(Q))} \to R$ (see Proposition~\ref{prop:many_nullstellensatz}).
		\end{proof}

		\underline{Step 2: Replace the set with simpler one}
        
        \begin{lemma}\label{lem:two_polynomials_geom}
			Let $Y \backslash V_{\varepsilon}(Q) \subset \mathbb{A}^{d + 1}(R)$ be a nonempty subset, where $Y$ is an irreducible over $k$ differential algebraic $R$-variety of dimension $d$, and $Q \in k\{y_1, \ldots, y_{d + 1} \}$ is a differential polynomial.
    	    Then, there exist differential polynomials $P, S \in k\{y_1, \ldots, y_{d + 1} \}$ such that set $V([P]) \backslash V_{\varepsilon}(S)$ is nonempty and $Y \backslash V_\varepsilon(Q) \supset V([P]) \backslash V_\varepsilon(S)$.
        
	        Moreover, $P$ and $S$ can be chosen such that $\ord_{y_i} P > \ord_{y_i} S$ for some $i$, and $P$ is irreducible.
		\end{lemma}
        
        \begin{proof}
			Without loss of generality we may assume that $y_1, \ldots, y_d$ are differentially independent modulo $I_k(Y)$.
    	    Applying Lemma~\ref{lem:two_polynomials} to ideal $I_k(Y)$ and differential polynomial $Q$, we obtain polynomials $P$ and $S$.
            Since $\ord_{y_{d + 1}} P > \ord_{y_{d + 1}} S$ and $P$ is irreducible, differential algebra $A = \left(k\{ y_1, \ldots, y_{d + 1} \} / {P} \right)_{S}$ is a differentially finitely generated differential $k$-algebra.
            Thus, Proposition~\ref{prop:many_nullstellensatz} implies that $V([P]) \backslash V_{\varepsilon}(S)$ is nonempty.
            
            Consider a point $p \in V([P]) \backslash V_{\varepsilon}(S)$ or, equivalently, a differential homomorphism $f\colon k\{y_1, \ldots, y_{d + 1} \} \to R$ such that $[P] \subset \Ker f$ and $f(S)$ is invertible.
            Then, Lemma~\ref{lem:two_polynomials} implies that $I_k(Y) \subset \Ker f$.
            Moreover, since $\varepsilon\circ f (S) \neq 0$ Lemma~\ref{lem:two_polynomials} implies that $\varepsilon\circ f(Q) \neq 0$.
            Thus, $V([P]) \backslash V_{\varepsilon}(S) \subset Y \backslash V_\varepsilon(Q)$.
		\end{proof}
        
        
        \underline{Step 3: Map onto $\mathbb{A}^d(R)$.}
    
	    \begin{lemma}\label{lem:shifting_geom}
			Let $P, S \in k\{ y_1, \ldots, y_{d + 1} \}$, $\ord_{y_{d + 1}} P > \ord_{y_{d + 1}} S$, and $P$ is irreducible.
        	Then, there exists a $\mathbb{Q}$-map $\varphi\colon \mathbb{A}^{d + 1}(R) \to \mathbb{A}^{d}(R)$ such that $\varphi\left(V([P]) \backslash V_{\varepsilon}(S) \right) = \mathbb{A}^d(R)$.
		\end{lemma}
    
    \begin{proof}
    	The map $\varphi$ will be constructed as a composition of maps corresponding to automorphisms from Lemma~\ref{lem:high_order} and Lemma~\ref{lem:manageable} and the projection onto the first $d$ coordinates.
        Applying the map $f_1^{\#}$ from Lemma~\ref{lem:high_order}, we may further assume, that $P$ and $S$ satisfy the inequalities from Lemma~\ref{lem:high_order}.
        By $I_P$ and $S_P$ denote the initial and the separant of $P$ with respect to $y_{d + 1}$, respectively.
        By $R$ denote the resultant of $S_P$ and $P$ with respect to $y_{d + 1}$.
        Let us apply Lemma~\ref{lem:manageable} to $Q = I_P R S$ and obtain the automorphism $f_2$.
	    By $h$ denote the order of $P$ with respect to $y_{d + 1}$ and rewrite $P$ as
    	$$
	    A_D \left( y_{d + 1}^{(h)} \right)^{D} + A_{D - 1}\left( y_{d + 1}^{(h)} \right)^{D - 1} + \ldots + A_0, \mbox{ where } A_i \in k\{ y_1, \ldots, y_{d + 1} \} \mbox{ and } \ord_{y_{d + 1}} A_i < h.
    	$$
        Note that due to the inequalities from Lemma~\ref{lem:high_order}
        $$
        f_2(P) = f_2(A_D) \left( y_{d + 1}^{(h)} \right)^{D} + f_2(A_{D - 1})\left( y_{d + 1}^{(h)} \right)^{D - 1} + \ldots + f_2(A_0), \mbox{ where } \ord_{y_{d + 1}} f_2(A_i) < h.
        $$
        Hence, $\ord_{y_{d + 1}} P = \ord_{y_{d + 1}} f_2(P)$, $\ord_{y_{d + 1}} S = \ord_{y_{d + 1}} f_2(S)$, and $f_2(I_P)$ and $f_2(S_P)$ are the initial and the separant of $f_2(P)$ with respect to $y_{d + 1}$, respectively.
	    Automorphism $f_2$ defines the $\mathbb{Q}$-map $f_2^{\#}\colon \mathbb{A}^{d + 1}(R) \to \mathbb{A}^{d + 1}(R)$.
	    Applying $f_2^{\#}$, we may further assume that $Q = I_P R S$ is $y_{d + 1}$-manageable.
                
	    Let $\varphi_3 \colon \mathbb{A}^{d + 1}(R) \to \mathbb{A}^d(R)$ be the projection on the first $d$ coordinates.
	    We claim that $\varphi_3\left( V([P]) \backslash V_{\varepsilon}(S) \right) = \mathbb{A}^d(R)$.
        Consider an arbitrary $R$-point of $\mathbb{A}^d(R)$, that is a differential homomorphism $g\colon k\{ y_1, \ldots, y_d\} \to R$.
        This homomorphism endows $R$ with the structure of a differential $k\{ y_1, \ldots, y_d \}$-algebra.
        By $B$ we denote $k\{y_1, \ldots, y_{d + 1} \}_S / [P]$.
        Then, we need to prove that there exists a differential homomorphism $\tilde{g} \colon B \to R$ such that the composition of $\tilde{g}$ with the embedding $k\{ y_d, \ldots, y_d\} \to B$ is $g$.
        It is sufficient to prove that there is a differential homomorphism
        $$
        A = B \otimes_{ k\{ y_1, \ldots, y_d\} } R \to R.
        $$
        Then, $g$ will be given as a composition of $B \to A$ with this homomorphism.
        The existence of the above differential homomorphism would follow from the existence of a homomorphism $A \to k_0$.
        The latter can be constructed as a tensor product of the residue map $R \to k_0$ and a homomorphism $\overline{g}\colon B \to k_0$ such that the composition $k\{ y_1, \ldots, y_d\} \to B$ with $\overline{g}$ is the residue of $g$.
        
        Thus, it is sufficient to extend the residue $\overline{g}\colon k\{ y_1, \ldots, y_d\} \to k_0$ to a homomorphism $\overline{g}\colon k\{ y_1, \ldots, y_{d + 1} \} \to k_0$ such that $\overline{g}(S) \neq 0$ and $\overline{g}\left( P^{(j)} \right) = 0$ for all $j$.
        Due to $y_{d + 1}$-manageability, $Q(\overline{g}(y_1), \ldots, \overline{g}(y_d), y_{d + 1})$ is a nonzero differential polynomial.
        Thus, $P(\overline{g}(y_1), \ldots, \overline{g}(y_d), y_{d + 1})$ is also a nonzero differential polynomial.
        Moreover, since $I_P(\overline{g}(y_1), \ldots, \overline{g}(y_d), y_{d + 1}) \neq 0$,
        $$
        h = \ord_{y_{d + 1}} P(\overline{g}(y_1), \ldots, \overline{g}(y_d), y_{d + 1}) = \ord_{y_{d + 1}} P(y_1, \ldots, y_{d + 1}) > \ord_{y_{d + 1}} Q(\overline{g}(y_1), \ldots, \overline{g}(y_{d}), y_{d + 1})
        $$
        Hence, we can choose $\overline{g}(y_{d + 1}), \ldots, \overline{g}(y_{d + 1}^{(h)})$ such that $\overline{g}(Q) \neq 0$ and $\overline{g}(P) = 0$.
        Since $\overline{g}(R) \neq 0$, $\overline{g}(S_P)$ is not zero (see Remark~\ref{rem:resultant}).
        Now assume that we have already defined $\overline{g}(y_{d + 1}), \ldots, \overline{g}\left( y_{d + 1}^{(h + N)} \right)$ such that $\overline{g}(P^{(j)}) = 0$ for all $j \leqslant N$.
        Let us recall that (see proof of Lemma~\ref{lem:partial_reduction})
        $$
        P^{(N + 1)} = S_P y_{d + 1}^{(N + h + 1)} + T, \mbox{ where } \ord_{y_{d + 1}} T \leqslant N + h 
        $$
        In order to ensure that $\overline{g}(P^{(N + 1)}) = 0$, we set $\overline{g}(y_{d + 1}^{(N + h + 1)}) = \frac{-\overline{g}(T)}{\overline{g}(S_P)}$.
        Then, proceeding by induction on $N$ we obtain a desired homomorphism $\overline{g}\colon k\{y_1, \ldots, y_{d + 1} \} \to k_0$.
        
        Thus, we proved that $\varphi_3\left( V([P]) \backslash V_{\varepsilon}(S) \right) = \mathbb{A}^d(R)$.
        Hence, the composition $\varphi = \varphi_3 \circ f_2^{\#} \circ f_1^{\#}$ is a desired map.
    \end{proof}
    
    Let us return to the proof of Theorem~\ref{th:alg_geom}.
    First, if $X$ is reducible, we replace $X$ with any of its irreducible components of maximal dimension.
    Then, we apply Lemma~\ref{lem:project_to_d1_geom} and map a subset of $X$ surjectively onto the nonempty set of the form $Y \backslash V_{\varepsilon}(Q)$.
    Due to Lemma~\ref{lem:two_polynomials_geom}, $Y \backslash V_{\varepsilon}(Q)$ contains a subset of the form $V([P]) \backslash V_\varepsilon(S)$, where $P$ is irreducible, $\ord_{y_i} P > \ord_{y_i} S$ for some $i$ and $P, S \in k\{ y_1, \ldots, y_{d + 1}\}$.
    Then, we use Lemma~\ref{lem:shifting_geom} in order to map the set $V([P]) \backslash V_{\varepsilon}(S)$ surjectively onto $\mathbb{A}^d(R)$.
    The composition of these two projections is a desired $\mathbb{Q}$-map.
    
    \end{proof}

    
    \section{Algebraic version}\label{sec:algebraic}
    
	\begin{theorem}\label{th:diff_alg}
		Let $A$ be a finitely generated differential $k$-algebra without zero divisors.
        Assume that the differential transcendence degree of $A$ over $k$ is $d > 0$.
        Then, there exist differentially independent $a_1, \ldots, a_d \in A$ such that for every prime differential ideal $\mathfrak{p} \subset k\{a_1, \ldots, a_d\}$ there exists prime differential ideal $\mathfrak{q} \subset A$ such that $\mathfrak{q} \cap k\{ a_1, \ldots, a_n \} = \mathfrak{p}$.
	\end{theorem}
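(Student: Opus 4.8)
The plan is to realise $A$ as a subring of the coordinate ring of an irreducible differential variety over the differential closure of $k$, read off the polynomials defining $a_1,\ldots,a_d$ from Theorem~\ref{th:alg_geom}, and then upgrade the surjectivity of the resulting map to the required contraction property for prime differential ideals.

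First I would write $A=k\{x_1,\ldots,x_n\}/I$ with $I$ a prime differential ideal, fix a differential closure $K$ of $k$, and pass to $K\{x_1,\ldots,x_n\}$. Since all fields here have characteristic zero, $IK\{x_1,\ldots,x_n\}$ is a \emph{radical} differential ideal, and as $k\{x_1,\ldots,x_n\}\hookrightarrow K\{x_1,\ldots,x_n\}$ is faithfully flat, a minimal prime $\mathfrak{q}_0$ of $IK\{x_1,\ldots,x_n\}$ contracts to $I$; invariance of differential dimension under extension of the base field then gives that $X:=V(\mathfrak{q}_0)$ is irreducible of dimension $d$ with coordinate ring $A_K:=K\{x_1,\ldots,x_n\}/\mathfrak{q}_0\supseteq A$. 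Applying Theorem~\ref{th:alg_geom} to $X$ yields a $\mathbb{Q}$-map $\varphi=(P_1,\ldots,P_d)\colon\mathbb{A}^n(K)\to\mathbb{A}^d(K)$ with $\varphi(X)=\mathbb{A}^d(K)$; crucially $P_i\in\mathbb{Q}\{x_1,\ldots,x_n\}\subseteq k\{x_1,\ldots,x_n\}$, so $a_i:=P_i+I$ lies in $A$, and I set $B:=k\{a_1,\ldots,a_d\}\subseteq A$. Kolchin-density of $\varphi(X)$ in $\mathbb{A}^d(K)$ forces the induced homomorphism $K\{y_1,\ldots,y_d\}\to A_K$, $y_i\mapsto a_i$, to be injective, so $a_1,\ldots,a_d$ are differentially independent over $K$, a fortiori over $k$; thus $B\cong k\{y_1,\ldots,y_d\}$ and $B\otimes_k K\cong K\{y_1,\ldots,y_d\}$ embeds in $A_K$.

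Next I would reduce the statement to a single ideal identity. Given a prime differential ideal $\mathfrak{p}\subset B$, the extension $\mathfrak{p}A$ is a differential ideal, so $\sqrt{\mathfrak{p}A}$ is a radical differential ideal, and by the Ritt--Raudenbush theorem $\sqrt{\mathfrak{p}A}=\mathfrak{q}_1\cap\cdots\cap\mathfrak{q}_r$ with each $\mathfrak{q}_i$ prime differential. If one knows that $\mathfrak{p}A\cap B=\mathfrak{p}$, then $S:=B\setminus\mathfrak{p}$ is disjoint from $\mathfrak{p}A$, hence from $\sqrt{\mathfrak{p}A}=\bigcap_i\mathfrak{q}_i$; were every $\mathfrak{q}_i$ to meet $S$, a product of elements $s_i\in\mathfrak{q}_i\cap S$ would lie in $\bigcap_i\mathfrak{q}_i$ and in $S$, a contradiction, so some $\mathfrak{q}:=\mathfrak{q}_i$ avoids $S$; since also $\mathfrak{q}\supseteq\mathfrak{p}A\supseteq\mathfrak{p}$, this gives $\mathfrak{q}\cap B=\mathfrak{p}$, as required. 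So everything comes down to $\mathfrak{p}A\cap B=\mathfrak{p}$, which I would prove by returning to geometry. Put $W:=V(\mathfrak{p}K\{y_1,\ldots,y_d\})\subseteq\mathbb{A}^d(K)$; since $\varphi(X)=\mathbb{A}^d(K)\supseteq W$ one has $\varphi(\varphi^{-1}(W)\cap X)=W$. The set $\varphi^{-1}(W)\cap X$ has coordinate ring $A_K/\sqrt{\mathfrak{p}A_K}$ (with $\mathfrak{p}$ viewed inside $A_K$ via the embedding above) and dominates $W$, so $K\{y_1,\ldots,y_d\}/\mathfrak{p}K\{y_1,\ldots,y_d\}\hookrightarrow A_K/\sqrt{\mathfrak{p}A_K}$, i.e. $\sqrt{\mathfrak{p}A_K}\cap K\{y_1,\ldots,y_d\}=\mathfrak{p}K\{y_1,\ldots,y_d\}$ (the latter being already radical). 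Then, using $\mathfrak{p}A\subseteq\mathfrak{p}A_K$, $B\subseteq K\{y_1,\ldots,y_d\}$, and faithful flatness of $k\{y_1,\ldots,y_d\}\hookrightarrow K\{y_1,\ldots,y_d\}$,
$$
\mathfrak{p}A\cap B\subseteq\sqrt{\mathfrak{p}A_K}\cap B=\bigl(\sqrt{\mathfrak{p}A_K}\cap K\{y_1,\ldots,y_d\}\bigr)\cap B=\mathfrak{p}K\{y_1,\ldots,y_d\}\cap B=\mathfrak{p},
$$
and the reverse inclusion is trivial.

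The main obstacle, as I see it, is the bookkeeping in passing between $k$ and its differential closure: Theorem~\ref{th:alg_geom} only asserts surjectivity on $K$-points of the auxiliary irreducible variety $X$, so I must (i) choose $X$ so that $\dim X=d$ and $A\hookrightarrow A_K$ --- which rests on invariance of differential dimension and faithful flatness of $k\{x_1,\ldots,x_n\}\hookrightarrow K\{x_1,\ldots,x_n\}$ --- and (ii) descend the conclusion back to $k$ while converting ``surjective on points'' into the exact identity $\mathfrak{p}A\cap B=\mathfrak{p}$, which is precisely the role of radicality of extensions of differential ideals in characteristic zero, faithful flatness, and the Ritt--Raudenbush decomposition (the last needed to manufacture a genuine prime \emph{differential} ideal $\mathfrak{q}$ rather than just a prime lying over $\mathfrak{p}$). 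Once these ingredients are in place the remaining steps are routine.
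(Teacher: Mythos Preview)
Your argument is correct, but it follows a genuinely different route from the paper. The paper, after producing the $\mathbb{Q}$-map $\varphi$ from Theorem~\ref{th:alg_geom} exactly as you do, proceeds model-theoretically: given a prime differential ideal $\mathfrak{p}\subset B$, it takes $L$ to be the differential closure of $\operatorname{Frac}(B/\mathfrak{p})$, observes that $K$ embeds in $L$, and then invokes model completeness of differentially closed fields to transfer the inclusion $\varphi(X)\supseteq\mathbb{A}^d$ from $K$ to $L$. The $L$-point of $\mathbb{A}^d$ coming from $B\to B/\mathfrak{p}\hookrightarrow L$ then lifts to an $L$-point of $X$, and its kernel in $A$ is the desired $\mathfrak{q}$.

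You instead stay over the single differentially closed field $K$ and convert surjectivity on $K$-points into the contraction identity $\mathfrak{p}A\cap B=\mathfrak{p}$ via the differential Nullstellensatz and faithful flatness of $k\{y\}\hookrightarrow K\{y\}$, and only then invoke the Ritt--Raudenbush decomposition of $\sqrt{\mathfrak{p}A}$ to extract a genuine prime differential $\mathfrak{q}$. This trades the single model-theoretic black box for a longer but entirely ring-theoretic chain, which has the advantage of being self-contained for readers unfamiliar with model completeness; the paper's approach is shorter and also yields slightly more, namely an explicit differential homomorphism $A\to L$ realising $\mathfrak{q}$. Two small remarks on your write-up: your claim $\dim X=d$ is best justified by noting that $K$ is differentially algebraic over $k$, so $\operatorname{diffdeg}_K\operatorname{Frac}(A_K)=\operatorname{diffdeg}_k\operatorname{Frac}(A_K)\geqslant\operatorname{diffdeg}_k\operatorname{Frac}(A)=d$, rather than by linear disjointness (which may fail when $K\otimes_kA$ is not a domain); and when you pass from $\mathfrak{p}A\cap B=\mathfrak{p}$ to $S\cap\sqrt{\mathfrak{p}A}=\varnothing$ you are implicitly using that $\mathfrak{p}$ is prime (if $s^n\in\mathfrak{p}A\cap B=\mathfrak{p}$ then $s\in\mathfrak{p}$), which is fine but worth making explicit.
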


	\begin{proof}
    	Let $K$ be the differential closure of $k$.
        Let $b_1, \ldots, b_n$ be a set of generators of $A$.
        By $p \colon K\{ x_1, \ldots, x_n\} \to K \otimes_k A$ we denote the surjective differential homomorphism defined by $p (x_i) = 1 \otimes b_i$.
        Then, $K \otimes_k A$ is a coordinate ring of $X = V(\Ker\pi)$, and $\dim X = d$.
        Due to Corollary~\ref{cor:alg_geom}, there exists a $\mathbb{Q}$-map $\varphi\colon \mathbb{A}^{n}(K) \to \mathbb{A}^{d}(K)$ such that $\varphi(X) = \mathbb{A}^{d}(K)$.
        We identify the coordinate ring of $\mathbb{A}^{d}(K)$ with $K\{y_1, \ldots, y_d\}$
        Set $a_1 = p(\varphi^{*}(y_1))$, $\ldots$, $a_d = p(\varphi^{*}(y_d))$.
        Since $\varphi$ is a $\mathbb{Q}$-map $a_1, \ldots, a_n$ belong to  $1 \otimes A$ and can be identified with some elements of $A$.
        Let $B = k\{ a_1, \ldots, a_d\} \subset A$.
        Every differential homomorphism $B \to K$ can be extended to a differential homomorphism $K \otimes B \to K$.
        The surjectivity of $\varphi$ implies that every differential homomorphism $K \otimes B \to K$ can be extended to a differential homomorphism $K \otimes A \to K$.
        
        In general, extending prime differential ideal $\mathfrak{p}$ from $B$ to $A$ is equivalent to extending a differential homomorphism $B \to L$, where $L$ is some differentially closed extension of $k$, to a differential homomorphism $A \to L$.
        Indeed, $L$ can be chosen to be the differential closure of the field of fractions of $B / \mathfrak{p}$.
        Thus, all we need is the following lemma (see also \cite[\S I.4]{cassidy}):
        \begin{lemma}
			Let $X$ and $Y$ be a differential algebraic varieties defined by ideals $I \subset K\{ x_1, \ldots, x_n\}$ and $J \subset K\{ y_1, \ldots, y_m\}$, respectively.
            Let $\varphi\colon \mathbb{A}^n(K) \to \mathbb{A}^m(K)$ such that $\varphi(X) \supset Y$, and $L \supset K$ is a differentially closed extension of $K$.
            By $X_L$ and $Y_L$ we denote differential algebraic varieties over $L$ defined by the same equations as $X$ and $Y$.
            By $\varphi_L$ we denote a map $\mathbb{A}^n(L) \to \mathbb{A}^m (L)$ defined by the same formulas as $\varphi$. 
            
            Then, $\varphi_L(X_L) \supset Y_L$.
		\end{lemma}
        
        \begin{proof}
        	By the Ritt-Raudenbash theorem every differential algebraic variety can be defined by a finite number of equations.
            By $P_1, \ldots, P_s$ and $Q_1, \ldots, Q_l$ we denote defining equations for $X$ and $Y$, respectively.
			The inclusion $\varphi(X) \supset Y$ can be expressed by the following first-order formula over $K$:
            \begin{multline*}
            \forall a_1, \ldots, a_m \left( Q_1(a_1, \ldots, a_m) = 0 \wedge \ldots \wedge Q_l(a_1, \ldots, a_m)\right) \to \\ \exists b_1, \ldots, b_n \left( P_1(b_1, \ldots, b_n) \wedge \ldots \wedge P_s(b_1, \ldots, b_n) \wedge \varphi(b_1, \ldots, b_m) = (a_1, \ldots, a_n)\right)
            \end{multline*}
            Due to model completness of differentially closed fields (see \cite[Corollary 2.5]{marker}), the same first-order formula holds over $L$.
            Thus, $\varphi_L(X_L) \supset Y_L$.
		\end{proof}
        
    \end{proof}

    
    \section{Applications to power series}\label{sec:power_series}
        
	\begin{proposition}\label{prop:complex_power_series}
    	Let $P_1(x_1, \ldots, x_n), \ldots, P_m(x_1, \ldots, x_n) \in \mathbb{C}\{x_1, \ldots, x_n\}$ be a set of differential polynomials such that radical differential ideal $\{P_1, \ldots, P_m\}$ is prime and the differential transcendence degree of $\mathbb{C}\{x_1, \ldots, x_n\} / \{P_1, \ldots, P_m\}$ equals $d > 0$.
        
        Then there exists an invertible change of variables $x_i = Q_i(y_1, \ldots, y_n) \in \mathbb{Q}\{y_1, \ldots, y_n\}$ ($i = 1, \ldots, n$) such that for every formal power series $f_1(t), \ldots, f_d(t) \in \mathbb{C} [[t]]$ the system
        $$
        \begin{cases}
			\widetilde{P}_1(f_1(t), \ldots, f_d(t), y_{d + 1}, \ldots, y_n) = 0 \\
            \vdots \\
            \widetilde{P}_m(f_1(t), \ldots, f_d(t), y_{d + 1}, \ldots, y_n) = 0 
		\end{cases},
        $$
        where $\widetilde{P}_i(y_1, \ldots, y_n) = P_i\left( Q_1(y_1, \ldots, y_n), \ldots, Q_n(y_1,\ldots, y_n ) \right)$, has a solution in the ring of formal power series $\mathbb{C}[[t]]$.
        
        Moreover, if power series $f_1(t), \ldots, f_d(t)$ converge in some neighbourhood of $t = 0$, then the above system has an analytic solution at $t = 0$.
    \end{proposition}

	\begin{proof}
		Both maps in the proof of Theorem~\ref{th:alg_geom} (i.e. the map from Lemma~\ref{lem:project_to_d1_geom} and the map from Lemma~\ref{lem:shifting_geom}) are compositions of an automorphism and a projection (see Remark~\ref{rem:factorization} and proofs of Lemma~\ref{lem:two_polynomials_geom} and Lemma~\ref{lem:shifting_geom}).
        Thus, the resulting map $\varphi\colon \mathbb{A}^n(k_{\infty}) \to \mathbb{A}^d(k_\infty)$ can be factored as a composition $\varphi = \pi \circ \alpha$, where $\alpha \colon \mathbb{A}^n(k_\infty) \to \mathbb{A}^n(k_\infty)$ is an automorphism, and $\pi\colon \mathbb{A}^n(k_\infty) \to \mathbb{A}^d(k_\infty)$ is a projection onto the first $d$ coordinates.
        In our case $k_\infty = \mathbb{C}[[t]]$, and an automorphism $\alpha^{*}\colon \mathbb{C}\{ x_1, \ldots, x_n\} \to \mathbb{C}\{ x_1, \ldots, x_n\}$ of differential $\mathbb{C}$-algebra gives us the desired change of variables by $y_i = \alpha^{*}(x_i)$ for all $i$. 
        
        In order to prove the last claim of the proposition let us analyze the end of the proof of Theorem~\ref{th:alg_geom}.
        Power series $f_{d + 1}(t)$ is constructed as a solution of the differential equation $P(f_1, \ldots, f_d, y_{d + 1}) = 0$ of order $h$, where $f_1, \ldots, f_d$ are analytic at $t = 0$.
        Moreover, the construction implies, that $\frac{\partial}{\partial y_{d + 1}^{(h)}} P(f_1, \ldots, f_{d + 1})(0) \neq 0$.
        Then, due to the Malgrange theorem (see \cite{malgrange}, \cite[Th. 1]{gontsov}) power series $f_{d + 1}(t)$ converges in some neighbourhood of $t = 0$.
        Since $f_i(t)$ are rational functions in $f_1, \ldots, f_{d + 1}$ and their derivatives for all $i > d + 1$, they are analytic at $t = 0$, too.
	\end{proof}

	\begin{remark}
		In practice ideal $\{ P_1, \ldots, P_m\}$ need not be prime, but there is an algorithm computing its decomposition as an intersection of prime differential ideals (see \cite[Section V.5]{ritt}), which corresponds to a decomposition of a differential algebraic variety as a finite union of irreducible varieties.
	\end{remark}

	\begin{proposition}\label{prop:complex_polynomial}
		Let $P_1(t, x_1, \ldots, x_n), \ldots, P_m(t, x_1, \ldots, x_n) \in \mathbb{C}[t]\{x_1, \ldots, x_n\}$ be a set of differential polynomials over the differential ring $\mathbb{C}[t]$ (where $t^{\prime} = 1$) such that radical differential ideal $\{P_1, \ldots, P_m\}$ is prime and the differential transcendence degree of $\mathbb{C}[t]\{x_1, \ldots, x_n\} / \{P_1, \ldots, P_m\}$ equals $d > 0$.
        
        Then there exists an invertible change of variables $x_i = Q_i(t, y_1, \ldots, y_n) \in \mathbb{Q}[t]\{y_1, \ldots, y_n\}$ ($i = 1, \ldots, n$) such that for every formal power series $f_1(s), \ldots, f_d(s) \in \mathbb{C} [[s]]$ there exists a constant $\lambda \in \mathbb{C}$ such that the system
        $$
        \begin{cases}
			\widetilde{P}_1(t, f_1(t - \lambda), \ldots, f_d(t-\lambda), y_{d + 1}, \ldots, y_n) = 0 \\
            \vdots \\
            \widetilde{P}_m(t, f_1(t - \lambda), \ldots, f_d(t - \lambda), y_{d + 1}, \ldots, y_n) = 0 
		\end{cases},
        $$
        where $\widetilde{P}_i(t, y_1, \ldots, y_n) = P_i\left( t, Q_1(t, y_1, \ldots, y_n), \ldots, Q_n(t, y_1,\ldots, y_n ) \right)$, has a solution in the ring of formal power series $\mathbb{C}[[t - \lambda]]$.
	\end{proposition}
    
    \begin{proof}
		Let us consider $t$ as one more differential indeterminate and add one more equation $t^{\prime} = 1$.
        An ideal remains prime, and the differential transcendence degree is the same.
        Like in the proof of Proposition~\ref{prop:complex_power_series}, Theorem~\ref{th:alg_geom} gives us an invertible change of coordinates $x_i = Q_i(y_1, \ldots, y_{n + 1})$, $t = Q_{n + 1}(y_1, \ldots, y_{n + 1})$.
        We claim that we can assume that $t = y_{n + 1}$.
        In order to prove the claim, let us analyze the proof of Lemma~\ref{lem:project_to_d1}.
        Since $t$ is differentially algebraic over $\mathbb{Q}$, $t$ can not be contained in a differential transcendence basis of $A$.
        Furthermore, we can assume that $t = a_{k}$, where $k > d + 1$ (we follow the notation of Lemma~\ref{lem:project_to_d1}).
        Thus, $t$ is not affected by any of automorphisms from the proof of Theorem~\ref{th:alg_geom}.
        Thus, we may assume that $t = y_{n + 1}$, after the change of variables we obtain a system of the form
        $$
        \begin{cases}
			\widetilde{P}_1(t, y_1, \ldots, y_n) = 0 \\
            \vdots \\
            \widetilde{P}_m(t, y_1, \ldots, y_n) = 0 \\
            t^{\prime} = 1
		\end{cases}
        $$
        
        Let us consider a partial substitution $y_1 = f_1(s), \ldots, y_d = f_d(s)$, where $f_i(s) \in \mathbb{C}[[s]]$.
        It can be extended to the solution in $\mathbb{C}[[s]]$ of the system above, namely $y_{d + 1} = f_{d + 1}(s), \ldots, y_n = f_n(s), t = f_{n + 1}(s)$.
        Since $t^{\prime} = 1$, $f_{n + 1}(s)$ is of the form $s + \lambda$ for some $\lambda \in \mathbb{C}$.
        Replacing $s$ with $t - \lambda$ everywhere, we obtain a desired solution.
	\end{proof}


\section{Concluding remarks and acknowledgments}
    
    \begin{remark}
		Let us note that proofs in the paper could be organized in a significantly different way.
        Corollary~\ref{cor:infty} implies a nondifferential analogue of Theorem~\ref{th:diff_alg}, that is for every finitely generated integral differential algebra $A$ over a differential field $k$ there exist $b_1, \ldots, b_d \in A$ such that $A$ is differentially algebraic over $B = k\{ b_1, \ldots, b_d\}$, and for every prime (not necessarily differential) ideal $\mathfrak{p} \subset B$ there exists prime ideal $\mathfrak{q} \subset A$ such that $\mathfrak{p} = \mathfrak{q} \cap B$.
        Geometrically speaking, it means that there is a surjective map $\Spec A \to \Spec B$.
        Statement~2 from \cite{dima_inheritance} implies that the same map induces a surjective map from the differential spectra of $A$ onto the differential spectra of $B$.
        Thus, Theorem~\ref{th:diff_alg} follows from Corollary~\ref{cor:infty}.
        Furthermore, Corollary~\ref{th:alg_geom} can be deduced from Theorem~\ref{th:diff_alg} using the differential Nullstellensatz.
        
        In particular it means, that the $\mathbb{Q}$-map given by Theorem~\ref{th:alg_geom} for a differential algebraic $k_{\infty}$-variety defined over $k$ will be also surjective for $K$-variety defined by the same equations, where $K$ is the algebraic closure of $k$.
	\end{remark}

	\begin{remark}
		The following example shows that a surjective map onto the affine space for a differential algebraic $K$-variety defined over $k$, where $K$ is the differential closure of $k$, might not be surjective for $k_{\infty}$-variety defined by the same equations.
        
        Let $P = xy^{\prime} + (x^{\prime} + 1)y - 1 \in \mathbb{C}\{x, y\}$, and by $K$ denote the differential closure of $\mathbb{C}$.
        By $X \subset \mathbb{A}^2(K)$ denote an irreducible differential algebraic variety defined by $I = [P] : x^{\infty}$.
        Consider a projection $\pi\colon \mathbb{A}^2(K) \to \mathbb{A}^1(K)$ defined by $\pi((x_0, y_0)) = x_0$.
        
        We claim that $\pi(X) = \mathbb{A}^1(K)$.
        Choose an arbitrary $x_0 \in \mathbb{A}^1(K)$.
        If $x_0 \neq 0$, then there exists $y_0 \in K$ satisfying the equation $x_0y^{\prime} + (x_0^{\prime} + 1)y - 1 = 0$.
        Then, $(x_0, y_0) \in X$.
        Now it suffices to show that $(0, 1) \in X$.
        For an arbitrary differential polynomial $Q$ by $Q_0$ we denote a sum of monomials involving either $x^{(i)}$ for some $i$, or $y^{(j)}$ for some $j > 0$.
        Let $Q_1 = Q - Q_0 \in K[y]$.
        If that $Q \in I$, then there exists $N$ such that $x^NQ \in [P]$.
		Note that polynomials of the form $TP^{(i)}$, where $i > 0$, do not have common monomials with $x^NQ_1$, so $x^NQ_1$ is divisible by $y - 1$.
        Thus, $Q$ vanishes at point $(0, 1)$.
        
        On the other hand, there is no vanishing point of $P$ of the form $(-t, y) \in \mathbb{C}[[t]] = \mathbb{C}_{\infty}$, so the image of $X$ under projection $\mathbb{A}^2\left( \mathbb{C}[[t]] \right) \to \mathbb{A}^1\left( \mathbb{C}[[t]] \right)$ defined by the same formula as $\pi$ is not the whole affine line.
	\end{remark}

    The author is grateful to Dmitry Trushin, Yu.P. Razmyslow, Alexey Ovchinnikov, and Renat Gontsov for useful discussions and to the referees for their invaluable comments and suggestions.


\end{document}